\setlist{leftmargin=*}
\setlist[1]{labelindent=\parindent} 
\newtheorem {Theorem}   {Theorem}
\newtheorem {thm}    [Theorem]{Theorem}
\newtheorem {lem}      [Theorem]    {Lemma}
\newtheorem {prp}[Theorem]  {Proposition}
\newcounter{AbcT}
\newtheorem {AbcTheorem} [AbcT]{Theorem}
\newtheorem {AbcLemma} [AbcT]{Lemma}
\theoremstyle{definition}
\renewcommand{\a}{\alpha}
\renewcommand{\b}{\beta}
\renewcommand{\d}{\delta}
\newcommand{\e}{\varepsilon}
\newcommand{\f}{\varphi}
\renewcommand{\l}{\lambda}
\newcommand{\s}{\sigma}
\renewcommand{\t}{\theta}
\newcommand{\R}{{\mathbb R}}
\newcommand{\Z}{{\mathbb Z}}
\newcommand{\C}{{\mathbb C}}
\newcommand{\F}{{\mathbb F}}
\renewcommand{\P}{{\mathbb P}}
\newcommand {\cL} {{\mathcal L}}
\newcommand {\cA} {{\mathcal A}}
\newcommand{\wh}{\widehat}
\newcommand{\be}{\begin{equation}}
\newcommand{\ee}{\end{equation}}
\newcommand{\bea}{\begin{eqnarray}}
\newcommand{\eea}{\end{eqnarray}}
\newcommand{\bean}{\begin{eqnarray*}}
\newcommand{\eean}{\end{eqnarray*}}
\newcommand {\equ}[1]     {\eqref{#1}}
\newcommand {\IGNORE}[1]  {}
\newcommand {\absolute}[1] {\left| {#1} \right|}
\newcommand {\norm}[1] {\left\| {#1} \right\|}
\newcommand{\fourIdx}[5]{%
\setbox1=\hbox{\ensuremath{^{#1}}}%
\setbox2=\hbox{\ensuremath{_{#2}}}%
\setbox5=\hbox{\ensuremath{#5}}%
\hspace{\ifnum\wd1>\wd2\wd1\else\wd2\fi}%
\ensuremath{\copy5^{\hspace{-\wd1}\hspace{-\wd5}#1\hspace{\wd5}#3}%
_{\hspace{-\wd2}\hspace{-\wd5}#2\hspace{\wd5}#4}%
}}
\newcommand{\ag}{\F_p^d\rtimes \SL_d(\F_p)}
\newcommand{\CF}{\C[\F_p^d]}
\newcommand{\CFh}{\C[\wh\F_p^d]}
\newcommand{\Lq}{{L^q}}
\newcommand{\Lhq}{{\wh L^q}}
\newcommand{\Ltwo}{{L^2}}
\newcommand{\Lhtwo}{{\wh L^2}}
\newcommand{\Lfour}{{L^4}}
\newcommand{\Lhfour}{{\wh L^4}}
\DeclareMathOperator{\SL}{SL}
\DeclareMathOperator{\SO}{SO}
\DeclareMathOperator{\sign}{sign}
\DeclareMathOperator{\Tr}{Tr}
\title[Spectral gap for affine transformations]%
{Spectral gap in the group of affine transformations over prime fields}
\author{Elon Lindenstrauss}
\address[Elon Lindenstrauss]{The Einstein Institute of Mathematics, Edmond J. Safra Campus,
Givat Ram, The Hebrew University of Jerusalem, Jerusalem, 91904, Israel}
\email{elon@math.huji.ac.il}
\author{P\'eter P. Varj\'u}
\address[P\'eter P. Varj\'u]{University of Cambridge, DPMMS, Wilberforce Road, Cambridge, CB3 0WA, UK}
\email{pv270@dpmms.cam.ac.uk}
\thanks{
EL was supported by  the European Research Council
(Advanced Research Grant 267259) and the ISF (grant 983/09)
PV was supported by the Simons Foundation and the European Research Council
(Advanced Research Grant 267259).
The authors would like to thank the Israeli Institute for Advanced Study for its hospitality during the fall of 2013. 
}
\date{\SVNDate}
\keywords{spectral gap, random walk, random walks on groups, expanders}
\begin{document}

\begin{abstract}
We study random walks on the groups $\F_p^d\rtimes \SL_d(\F_p)$.
We estimate the spectral gap in terms of the spectral gap of
the projection to the linear part $\SL_d(\F_p)$.
This problem is motivated by an analogue in the group $\R^d\rtimes \SO(d)$, which
have application to smoothness of self-similar measures.
\end{abstract}

\maketitle

\section{Introduction}
Let $G$ be a finite group.
Fix a set $S\subset G$, and let $X_1,X_2\ldots\in S$
be a sequence of independent random elements taking each element of $S$ with equal probability.
Denote the product of the first $l$ by $Y_l=X_l\cdots X_1$.
The sequence $Y_1,\cdots,Y_l$ is called the (simple) random walk on $G$ generated by $S$.

We consider the following operator acting on the space $\C[G]$ of complex valued functions on $G$:
\[
\cL(G,S)f(g)=\frac{1}{|S|}\sum_{s\in S}f(s^{-1}g),
\]
for $f\in\C[G]$ and $g\in G$.
In addition, we consider $\cL_0(G,S)$, the restriction of $\cL(G,S)$
to the one codimensional subspace of $\C[G]$ consisting of functions orthogonal to the constants.
This averaging operator is intimately connected with the random walk $Y_l$, and in particular the norm of $\cL_0(G,S)$ is closely connected with how quickly this random walk becomes equidistributed. Clearly $\|\cL_0(G,S)\| \leq \|\cL(G,S)\|=1$. We shall call the difference $1-\|\cL_0(G,S)\|$ the \emph{spectral gap of the random walk}. If $S$ is symmetric the spectral gap coincides with difference between the trivial eigenvalue $1$ of $\cL(G,S)$ and the greatest eigenvalue of the operator $\|\cL_0(G,S)\|$, though in general (despite the name which seems to be fairly standard) what we call the spectral gap has no direct spectral interpretation.
The operator norm here and everywhere below is with respect to the $L^2$ norm
on the space the operator is acting on, in this case the finite group $G$ equipped with the counting measure.

It is easily seen that the random walk mixes rapidly if the spectral gap is large.
Indeed, denote by $\d_1\in\C[G]$, the function given by
$\d_1(1)=1$ and $\d_1(g)=0$ for $g\neq 1$, where $1$ denotes the multiplicative unit in $G$
and in $\C$ and in any multiplicative group.
Then one can show by induction, that for all integer $l\ge0$, the probability that $Y_l=g$ is
$\cL(G, S)^l\d_1(g)$.
We can write
\[
\d_1(g)=\frac{1}{|G|}+f(g),
\]
where $f$ is a function orthogonal to the constants.
Then
\[
\left\|\frac{1}{|G|}-\cL(S)^l\d_1\right\|_{L^\infty}\le\left\|\frac{1}{|G|}-\cL(S)^l\d_1\right\|_\Ltwo
\le e^{-l(1-\|\cL_0(S)\|)}.
\]
In particular, the distribution of $Y_l$ is very close to uniform if say $l\ge10(1-\|\cL_0(S)\|)^{-1}\log|G|$.
More precisely, for such an $l$, we have
\[
\left|\P(Y_l=g)-\frac{1}{|G|}\right|\le\frac{1}{|G|^{10}}.
\]

There is also a combinatorial way to characterize large spectral gap.
If the spectral gap is large, then the  Cayley graph of $G$ with
respect to $S$ has a large isoperimetric constant.
If $S$ is symmetric (i.e. $s\in S$ implies $s^{-1}\in S$), then the converse is also true.
Graphs with large isoperimetric constants are called expanders.
For more details we refer to Lubotzky's survey \cite{Lub-colloquium2011}.

The problem of studying spectral gaps of random walks is interesting in its own right, and it has been
studied extensively.
Recently, spectral gap estimates were used together with sieve techniques to prove various results
in number theory and group theory.
A detailed account on these developments would go beyond the scope of our paper, so we refer
the interested reader to the recent surveys \cite{Lub-colloquium2011} and \cite{Kow-Bourbaki}.

\subsection{Statement of the result}
Let $p$ be a prime and denote by $\F_p$ the finite field of order $p$.
Our result compares the spectral gap of a random walk on $\ag$, the group of
affine transformations of $\F_p^d$ with its projection to $\SL_d(\F_p)$.

\begin{thm}\label{theorem:main}
There is a number $c$ depending only on $d$ such that the following holds.
Let $S'\subset\SL_d(\F_p)$, and let $S\subset\ag$ be such a set that for each $g\in S$
there is precisely one $\s\in S'$ such that the linear part of $g$ is $\s$.
Suppose further that $S$ is not contained in a coset of a proper subgroup of $\ag$.
Then
\[
1-\|\cL_0(\ag,S)\|\ge c\cdot\min\{1-\|\cL_0(\SL_d(\F_p),S')\|,|S|^{-1}\}.
\]
\end{thm}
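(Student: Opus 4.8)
The plan is to diagonalise $\cL(\ag,S)$ as far as the representation theory of $\ag=V\rtimes K$ (with $V=\F_p^d$, $K=\SL_d(\F_p)$, and $d\ge 2$) permits, and then control the resulting twisted averaging operators by playing the hypothesis on the linear part against the non‑degeneracy of $S$. First I would use the normal subgroup $V$: the map sending $f\in\C[\ag]$ to the function $\sigma\mapsto\sum_{v}f(v,\sigma)$ on $K$ intertwines $\cL(\ag,S)$ with $\cL(K,S')$, so $\C[\ag]$ splits into two $\cL(\ag,S)$‑invariant pieces. The first is a copy of $\C[K]$ on which $\cL(\ag,S)$ acts as $\cL(K,S')$; this is why the bound must involve $1-\|\cL_0(K,S')\|$, and it contributes nothing further. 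The second, call it $\cH$, is the space of $f$ whose $V$‑Fourier transform vanishes at the origin, and the whole problem is to prove $\|\cL(\ag,S)|_{\cH}\|\le 1-c\min\{1-\|\cL_0(K,S')\|,\,1-|S|^{-1}\}$.

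As an $\ag$‑module, $\cH\cong\ell^2_0(V)\otimes\C[K]$ with $(w,\tau)$ acting as $\pi_V(w,\tau)\otimes\lambda_K(\tau)$, where $\pi_V$ is the affine action of $\ag$ on mean‑zero functions on $\F_p^d$ and $\lambda_K$ is the left regular representation of $K$; decomposing $\C[K]=\bigoplus_{\rho\in\widehat K}V_\rho\otimes V_\rho^{*}$ reduces the task to bounding, for each irreducible $\rho$ of $K$,
\[
\Bigl\|\tfrac1{|S|}\sum_{(w,\tau)\in S}\pi_V(w,\tau)\otimes\rho(\tau)\Bigr\|_{\ell^2_0(V)\otimes V_\rho}\ .
\]
(Equivalently, via Mackey theory and transitivity of $K$ on $\widehat V\setminus\{0\}$—here $d\ge 2$ is needed—one is bounding $\frac1{|S|}\sum_{s\in S}\Pi(s)$ on the irreducible $\Pi=\operatorname{Ind}_{V\rtimes K_\chi}^{\ag}(\chi\otimes\rho)$ for each nontrivial $\chi$ and each irreducible $\rho$ of the stabiliser $K_\chi$.) Passing to the $V$‑Fourier transform, the displayed operator becomes, on $\ell^2(\widehat V\setminus\{0\})\otimes V_\rho$, the twisted walk $g\mapsto\bigl[\xi\mapsto\frac1{|S|}\sum_{(w,\tau)}e_p(-\langle\xi,w\rangle)\,\rho(\tau)\,g(\transpose\tau\xi)\bigr]$, which moves the frequency $\xi$ around the single $K$‑orbit $\widehat V\setminus\{0\}$, convolves in $K$ through $\rho$, and twists by the additive characters coming from the translation parts.

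Now comes the dichotomy I expect to drive the proof. If one discards the twists $e_p(-\langle\xi,w\rangle)$, the averaged operator is just $\cL(K,S')$ acting on $\ell^2(\widehat V\setminus\{0\})\otimes V_\rho$ viewed as a $K$‑module; on the part of this module containing no copy of the trivial $K$‑type it already has norm $\le\|\cL_0(K,S')\|$, and there the remaining work is a perturbation estimate showing that reinstating the multiplication operators $e_p(-\langle\cdot,w\rangle)$ costs only a bounded factor in the gap. On the complementary "trivial‑$K$‑type" part—for $\rho=\mathbf 1$ this is precisely $\pi_V$ on $\ell^2_0(V)$, i.e.\ the spectral gap of the affine \emph{action} of $S$ on $\F_p^d$; for general $\rho$ it is built from the copies of $\rho^{*}$ inside the linear action of $K$ on $\ell^2_0(V)$—the hypothesis on $K$ gives nothing, and the gap must be produced from the translation characters alone. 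This is where the assumption that $S$ lies in no coset of a proper subgroup enters, in the form $\langle S^{-1}S\rangle=\ag$: a vector on which the twisted operator has norm close to $1$ would be an almost‑invariant vector for $\pi_V$ restricted to $S^{-1}S$, hence (since $\pi_V$ has no $\ag$‑fixed vectors and $S^{-1}S$ generates $\ag$) cannot exist without cost, and one has to quantify this cost so that it yields the factor $\min\{\cdot,1-|S|^{-1}\}$. The appearance of $1-|S|^{-1}$ is forced by the structure: in the basic $\ell^2$ expansion of the squared norm the diagonal contributes $|S|^{-1}$ and the off‑diagonal splits into $|S|(|S|-1)$ terms whose total the remaining arguments must beat, which naturally produces the cap $1-|S|^{-1}$.

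The main obstacle, where most of the effort should go, is carrying out this last estimate \emph{with the sharp constant}, i.e.\ without losing a factor of $\log p$. The naive approach—iterate $\cL(\ag,S)$ until both the linear part $\tau_1\cdots\tau_\ell$ and the accumulated translation equidistribute, which needs $\ell\asymp(1-\|\cL_0(K,S')\|)^{-1}\log p$ steps—gives only $1-\|\cL_0(\ag,S)\|\gtrsim(1-\|\cL_0(K,S')\|)/\log p$. To reach the stated bound one needs either a self‑improving, $\ell^4$‑flattening‑type inequality that bootstraps a weak gain up to the full gap, or a direct operator estimate that uses $\|\cL_0(K,S')\|<1$ as an inequality on a single operator rather than as an equidistribution rate; and one needs to make the role of $\langle S^{-1}S\rangle=\ag$ effective with a constant depending only on $d$. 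These two ingredients are, in my view, the crux of the argument.
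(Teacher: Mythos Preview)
Your decomposition of $\C[\ag]$ into a copy of $\C[K]$ and the complement $\cH\cong\ell^2_0(V)\otimes\C[K]$ is correct, and you are right that the entire content of the theorem is the bound on $\cH$. You are also honest that the crux is avoiding the $\log p$ loss, and you point in the direction of $\ell^4$ methods. But the concrete mechanism you propose has a real gap. Your dichotomy splits $\ell^2_0(V)\otimes V_\rho$ into the trivial and nontrivial $K$-isotypic pieces and proposes to treat them separately: on the nontrivial piece, first drop the character twists $e_p(-\langle\xi,w\rangle)$ to get the bound $\|\cL_0(K,S')\|$, then ``reinstate the multiplication operators \dots\ [at] only a bounded factor in the gap.'' The problem is that the twisted operator does \emph{not} preserve the $K$-isotypic decomposition --- the multiplication operators $M_w:\f\mapsto e_p(-\langle\cdot,w\rangle)\f$ scramble $K$-types --- so there is no invariant ``nontrivial piece'' on which to run the argument, and the proposed perturbation is by unitaries of norm $1$, not by something small. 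I do not see how to make that step go through, and you do not supply it.

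The paper takes a structurally different route that sidesteps this obstruction. Rather than bounding the operator on $\cH$ directly, it first replaces $\mu$ by a convolution power $\mu_0=(\check\mu*\mu)^{l_0}$ with $l_0$ comparable to $\max\{(1-\|\cL_0^\t(\mu)\|)^{-1},(1-\a)^{-1}\}$, so that $\mu_0$ already enjoys a fixed gap on $K$ and fixed non-degeneracy. The heart of the argument is then a \emph{non-concentration} statement: for the random walk $\eta_l=\mu_0^{*(l)}.\d_{v_0}$ on $\F_p^d$, one shows $\|\eta_l\|_{L^2}\le 5p^{-d/4}$ once $l\gtrsim_d\log p$. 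The dichotomy that drives this is not your $K$-type dichotomy but a Fourier one (Proposition~\ref{proposition:nonconstant}): for any probability measure $\eta$ on $\F_p^d$, either a single atom carries almost all of $\|\eta\|_{L^2}$, or $|\wh\eta|$ restricted to $\wh\F_p^d\setminus\{0\}$ is quantitatively far from constant in $\wh L^4$. In the first case the hypothesis $\|\mu_0.\d_x\|_{L^2}\le 3/4$ gives a definite drop in $\|\eta_l\|_{L^2}$; in the second, the $L^2$ gap on $K$ is transferred to an $\wh L^4$ gap on $X=\wh\F_p^d\setminus\{0\}$ via the Mazur map and uniform convexity (Proposition~\ref{proposition:L4}), yielding a definite drop in $\|\psi_l\|_{\wh L^4}$. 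Iterating gives the non-concentration after $O_d(\log p)$ steps. Finally --- and this is an ingredient your outline does not mention --- the non-concentration is fed into the Bourgain--Gamburd machine (Theorem~\ref{theorem:BourgainGamburd}) together with a product-set growth statement in $\ag$ to produce a gap for $\mu_0$ that is \emph{uniform in $p$}; unwinding the relation between $\mu$ and $\mu_0$ then gives the factor $\min\{1-\|\cL_0^\t(\mu)\|,1-\a\}$. The $\log p$ that you worried about is absorbed by the $(\dim\pi)^{-\d/l_1}$ in Bourgain--Gamburd combined with $\dim\pi\gtrsim p^{d-1}$, not by any direct operator inequality on $\cH$.
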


Up to the constant $c$, the bound is sharp, as can be seen by the example when all but
one element of $S$ is contained in a subgroup isomorphic to $\SL_d(\F_p)$.
However, when the distribution of $S$ is better among the cosets of such subgroups
the bound can be improved.
In the next section, we will formulate a slightly more general version of this theorem
with an improved bound.

In order to apply Theorem~\ref{theorem:main}, a bound on the spectral gap
for the projection of the random walk in $\SL_d(\F_p)$ is required.
The following important result of Bourgain and Gamburd provides such a bound:

\begin{AbcTheorem}[Bourgain, Gamburd]\label{theorem:BourgainGamburdSLd}
Let $\overline S\subset\SL_d(\Z)$ be a finite symmetric set, which generates a Zariski-dense
subgroup.
Then there is a number $c$ depending on $\overline S$ (but not on $p$) such that the following
holds for all but finitely many primes $p$.
Let $S$ be the mod $p$ projection of $\overline S$.
Then
\[
1-\|\cL_0(\SL_d(\F_p),S)\|>c.
\]
\end{AbcTheorem}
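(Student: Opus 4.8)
\textbf{Plan of proof of Theorem~\ref{theorem:BourgainGamburdSLd}.}
The strategy is the Bourgain--Gamburd machine, which converts two qualitative inputs -- a non-concentration (``escape from subgroups'') estimate and a quasi-randomness bound on the representation theory of $\SL_d(\F_p)$ -- into a spectral gap. Write $\mu$ for the uniform probability measure on $S$, and let $\mu^{(l)}$ denote the $l$-fold convolution of $\mu$ with itself, which is the distribution of the random walk $Y_l$ on $\SL_d(\F_p)$ generated by $S$. Since $S$ is symmetric, $\|\cL_0(\SL_d(\F_p),S)\|$ is the second-largest absolute value of an eigenvalue of $\cL(\SL_d(\F_p),S)$, and it suffices to show that $\|\mu^{(l)}-u\|_{\Ltwo}$ decays exponentially in $l$ for some $l$ of size $\Theta(\log p)$, where $u$ is the uniform measure on $\SL_d(\F_p)$; equivalently $\|\mu^{(l)}\|_{\Ltwo}^2 - |\SL_d(\F_p)|^{-1} \le |\SL_d(\F_p)|^{-1-\delta}$ for some fixed $\delta>0$. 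The proof runs in three stages as follows.

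\textbf{Stage 1: Initial $\ell^2$-flattening via escape from subgroups.} First one shows that after $l_0 = C_1\log p$ steps, the walk has spread out at the coarsest scale: for every proper algebraic subgroup -- and more to the point, every proper subgroup -- $H<\SL_d(\F_p)$ one has $\mu^{(l_0)}(gH) \le p^{-\delta_0}$ for all $g$, with $\delta_0>0$ depending only on $\overline{S}$. This is the step where Zariski-density of $\langle\overline{S}\rangle$ enters: by a theorem in the style of Eskin--Mozes--Oh / Salehi-Golsefidy--Varj\'u, a Zariski-dense subgroup of $\SL_d(\Z)$ reduces mod $p$ onto all of $\SL_d(\F_p)$ for $p$ large, and the walk escapes from any proper subgroup at a definite exponential rate; the relevant proper subgroups of $\SL_d(\F_p)$ for large $p$ are, by Nori's theorem and the classification of subgroups of finite groups of Lie type, contained in subgroups of bounded complexity coming from proper algebraic subgroups of $\SL_d$. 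Combined with the elementary bound $\|\mu^{(2l_0)}\|_{\Ltwo}^2 = \|\mu^{(l_0)}\ast\check\mu^{(l_0)}\|_{\Ltwo} \le \max_g \mu^{(l_0)}(Hg)\cdot(\text{something})$-type manipulations (or directly via a Balog--Szemer\'edi--Gowers plus product-theorem argument), this gives a starting $\ell^2$-bound of the shape $\|\mu^{(l_1)}\|_{\Ltwo}^2 \le |\SL_d(\F_p)|^{-\delta_1}$ at time $l_1 = C_2\log p$.

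\textbf{Stage 2: $\ell^2$-flattening by the product theorem.} This is the heart of the argument and, I expect, the main obstacle. One iterates a flattening lemma: if $\nu$ is a probability measure on $\SL_d(\F_p)$ with $\|\nu\|_{\Ltwo}^2 \ge |\SL_d(\F_p)|^{-1+\delta}$ and $\nu$ is not concentrated on a coset of a proper subgroup at scale $\|\nu\|_{\Ltwo}^{-2}$, then $\|\nu\ast\nu\|_{\Ltwo} \le |\SL_d(\F_p)|^{-\epsilon(\delta)}\|\nu\|_{\Ltwo}$ for some $\epsilon(\delta)>0$ depending only on $d$. This follows from Bourgain--Gamburd's adaptation of the Balog--Szemer\'edi--Gowers lemma together with the Helfgott--Pyber--Szab\'o--Breuillard--Green--Tao product theorem in $\SL_d(\F_p)$: either a set grows under multiplication, or it is trapped near a proper subgroup. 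The non-concentration hypothesis needed to feed each iteration is inherited from Stage~1 (escape from subgroups persists under further convolution), so after $O_{\overline{S},d}(1)$ doublings -- hence after a further $O(\log p)$ steps of the original walk, since each $\mu^{(2m)} = \mu^{(m)}\ast\mu^{(m)}$ -- one reaches $\|\mu^{(l_2)}\|_{\Ltwo}^2 \le |\SL_d(\F_p)|^{-1+\delta_2}$ for an arbitrarily small prescribed $\delta_2>0$, with $l_2 = C_3\log p$.

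\textbf{Stage 3: From almost-$\ell^2$-mixing to a spectral gap, via quasi-randomness.} Finally, decompose $L^2_0(\SL_d(\F_p))$ into isotypic components under the regular representation. By Frobenius and the work of Landazuri--Seitz (and the fact that $\SL_d(\F_p)$ is quasi-random of degree $\gg p^{d-1}$, i.e. has no nontrivial complex representation of dimension below a fixed power of $p$), any measure $\nu$ whose $\ell^2$-mass is within $|\SL_d(\F_p)|^{-\delta_2}$ of uniform must already have operator-theoretic spectral radius on $L^2_0$ bounded away from $1$: concretely, $\|\cL_0(\SL_d(\F_p),S)\|^{2l_2}$ times the smallest nontrivial representation dimension is at most $\|\mu^{(l_2)}\|_{\Ltwo}^2|\SL_d(\F_p)| - 1$, so $\|\cL_0\|^{2l_2} \le |\SL_d(\F_p)|^{\delta_2}\cdot |\SL_d(\F_p)|^{-(d-1)/d}$, say. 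Choosing $\delta_2$ small compared to $1/d$ and recalling $l_2 = C_3\log p$ while $|\SL_d(\F_p)| \asymp p^{d^2-1}$, we get $\|\cL_0(\SL_d(\F_p),S)\| \le 1 - c$ with $c = c(\overline{S},d)>0$ independent of $p$, which is the assertion. I would expect Stages~1 and 3 to be essentially citations to the literature; the genuine work is assembling Stage~2 so that the number of doublings stays bounded independently of $p$, which is exactly what the product theorem in $\SL_d(\F_p)$ supplies.
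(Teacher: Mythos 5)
The paper offers no proof of Theorem~\ref{theorem:BourgainGamburdSLd}: it is invoked as a black box, with the $d=2$ case attributed to Bourgain--Gamburd and the $d\ge3$ case to their later work together with the product theorems of Breuillard--Green--Tao and Pyber--Szab\'o. So there is nothing internal to compare your argument against except the abstract engine the paper does prove, namely Theorem~\ref{theorem:BourgainGamburd} in Section~\ref{section:BGproof}, and your Stages~2 and~3 are exactly that engine: the Balog--Szemer\'edi--Gowers flattening iteration fed by a growth dichotomy, followed by the Sarnak--Xue trace/multiplicity argument using the Landazuri--Seitz lower bound on degrees of nontrivial representations. Your Stage~3 matches the paper's trace computation essentially line for line (note, though, that the minimal degree for $\SL_d(\F_p)$ is of order $p^{d-1}\asymp|\SL_d(\F_p)|^{1/(d+1)}$, so your exponent ``$(d-1)/d$'' is only a placeholder; this does not affect the logic, since any fixed positive power of $|G|$ suffices once $\delta_2$ is taken small). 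As a reconstruction of the cited proof, the architecture is correct.

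One clarification on Stage~1, which is the only place your mechanism is slightly off. Coset non-concentration for \emph{positive-dimensional} proper subgroups is what feeds the product-theorem dichotomy in Stage~2; it does not by itself produce the initial $\ell^2$-flattening unless you include the trivial subgroup, i.e.\ the pointwise bound $\|\mu^{(l)}\|_{L^\infty}\le p^{-\delta_0}$ for $l\asymp\log p$. That pointwise bound is the genuinely separate input, and in the cited proofs it comes from the Tits alternative plus a girth/Kesten argument: $\langle\overline S\rangle$ contains a free subgroup, the reduction mod $p$ admits no relations of length $O(\log p)$, so return probabilities up to that time are governed by the free group and decay exponentially. Your identity ``$\|\mu^{(2l_0)}\|_{\Ltwo}^2=\|\mu^{(l_0)}*\check\mu^{(l_0)}\|_{\Ltwo}$'' is also not quite right as written (the correct statement is $\|\mu^{(l)}\|_{\Ltwo}^2=\mu^{(2l)}(1)\le\|\mu^{(2l)}\|_{L^\infty}$ for symmetric $\mu$), but since you flag it as schematic this is a presentational rather than a mathematical defect. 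With Stage~1 repaired along these lines, the proposal is a faithful account of the Bourgain--Gamburd proof.
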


The $d=2$ case of this theorem is \cite[Theorem 1]{BG-prime}, and this
has been worked out by Kowalski \cite{Kow-Sl2} with explicit
constants.
A key ingredient in the proof is Helfgott's product theorem in \cite{Hel-Sl2}.
The $d\ge 3$ case is \cite[Theorem 1.2]{BG-primepower-II} which assumes a generalization of
Helfgott's theorem as a black box.
This generalization is due to Helfgott \cite{Hel-Sl3} in the $d=3$ case, and to Breuillard, Green and Tao \cite{BGT-product-theorem} and
independently by Pyber and Szab\'o \cite{PS-product-theorem} in the general case.

A key problem in the subject highlighted in \cite{LW-expanders} is determining how the size of the spectral gap depends on the choice of generators. It seems plausible that the conclusion of Theorem~\ref{theorem:BourgainGamburdSLd} could hold
with a constant $c$ depending only on $d$ and $|S|$, and not on a previously fixed set in $\SL_d(\Z)$.
The following theorem by Breuillard and Gamburd \cite[Theorem 1.1]{BG-strong-uniform} gives some evidence in this direction:

\begin{AbcTheorem}[Breuillard, Gamburd]\label{theorem:BreuillardGamburd}
For any $\delta>0$ and positive integer $N$,
there is a constant $c>0$ depending only on $\delta$ and $N$ such that for any sufficiently large $X$, for all but $X^{\delta}$ primes $p\le X$, for any symmetric generating set $S\subset\SL_2(\F_p)$ with $|S|=N$,
\[
1-\|\cL(\SL_2(\F_p),S)\|>c.
\]
\end{AbcTheorem}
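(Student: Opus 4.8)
The plan is to run the Bourgain--Gamburd machine on $G=\SL_2(\F_p)$ and to supply, uniformly over symmetric generating sets $S$ with $|S|=N$, the non-concentration input it requires, paying for this input with a sparse set of exceptional primes. Write $\mu$ for the uniform probability measure on $S$. Since $G$ is quasirandom --- every nontrivial irreducible representation has dimension $\ge(p-1)/2$ --- Plancherel's identity and this dimension bound give
\[
\|\mu^{*\ell}\|_2^2-|G|^{-1}\ \ge\ \tfrac{p-1}{2}\,|G|^{-1}\,\|\cL_0(G,S)\|^{2\ell},
\]
so it suffices to exhibit $\ell\le C\log p$, with $C=C(\delta,N)$, such that $\|\mu^{*\ell}\|_2^2\le 2|G|^{-1}$: then $\|\cL_0(G,S)\|^{2\ell}\le 2/(p-1)$ and hence $\|\cL_0(G,S)\|\le 1-c(\delta,N)$, which is the asserted gap. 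In fact there is slack here; it is enough to reach $\|\mu^{*\ell}\|_2^2\le|G|^{-1+\delta}$ for $\delta<1/3$, say.

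Two ingredients produce such an $\ell$. First, Helfgott's product theorem in $\SL_2(\F_p)$ together with the non-commutative Balog--Szemer\'edi--Gowers lemma gives the $L^2$-flattening alternative: there is $\kappa_0=\kappa_0(\delta)>0$ so that any symmetric probability measure $\nu$ on $G$ satisfies \emph{either} $\|\nu*\nu\|_2^2\le|G|^{-\kappa_0}\|\nu\|_2^2$, \emph{or} $\|\nu\|_2^2\le|G|^{-1+\delta}$, \emph{or} $\nu(gH)\ge|G|^{-\kappa_0}$ for some proper subgroup $H\le G$ and some $g$. The additive gain $\kappa_0$ in the $L^2$-exponent is what matters: once the third alternative is excluded at all scales beyond some $m$, only $O_\delta(1)$ successive squarings $\mu^{*m}\mapsto\mu^{*2m}\mapsto\dots$ bring $\|\mu^{*2^j m}\|_2^2$ below $|G|^{-1+\delta}$, so $\ell=O_{\delta,N}(\log p)$. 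Moreover non-concentration at scale $m$ propagates automatically: conditioning on the position at time $m$ gives $\mu^{*k}(gH)\le\max_{g',H'}\mu^{*m}(g'H')$ for every $k\ge m$ and every proper $H$. So everything reduces to the second ingredient, the \textbf{non-concentration estimate}: there are $\kappa=\kappa(\delta,N)>0$, $C=C(\delta,N)$, and a set of primes $\Pi$ with $|\Pi\cap[1,X]|\le X^{\delta}$, such that for $p\notin\Pi$ and every symmetric generating set $S$ with $|S|=N$ there is $m\le C\log p$ with $\mu^{*m}(gH)\le p^{-\kappa}$ for every proper subgroup $H\le\SL_2(\F_p)$ and every $g$.

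To prove this one invokes the classification of proper subgroups of $\SL_2(\F_p)$: up to conjugacy and bounded index they are the Borel subgroup $B$, the normalizers $N(T^{+})$, $N(T^{-})$ of the split and non-split maximal tori (of orders $2(p-1)$, $2(p+1)$), and the bounded exceptional subgroups $\widetilde A_4,\widetilde S_4,\widetilde A_5$. Package the "large" subgroups geometrically: pushing $\mu^{*k}$ forward along coset spaces identifies $\max_g\mu^{*k}(gH)$, for $H$ a conjugate of $B$, of $N(T^{+})$, of $N(T^{-})$, with the sup-norm of the law after $k$ steps of the $S$-random walk on $\P^1(\F_p)$, on the set of unordered pairs of distinct points of $\P^1(\F_p)$, and on the set of Galois-conjugate pairs in $\P^1(\F_{p^2})\setminus\P^1(\F_p)$, respectively --- sets of sizes $p+1$, $\binom{p+1}{2}$, $\binom{p}{2}$. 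Each of these is a connected Schreier graph with no $S$-invariant proper partition, because $S$ generates and $B$, $N(T^{\pm})$ are maximal, and a Balog--Szemer\'edi--Gowers analysis of the walk reduces "fails to spread in $C\log p$ steps" to "is trapped near a coset of a proper subgroup of the relevant torus", i.e.\ near the fixed locus of a small-order cyclic subgroup of $T^{\pm}\cong\Z/(p\mp1)$ --- the bounded exceptional subgroups being the degenerate case where that order is itself bounded. The decisive point is that a genuine trap of this kind forces the semisimple parts of the group elements visited by the walk to have multiplicative order confined to a proper divisor of $p\mp1$ lying in an intermediate range, and such a coincidence, occurring for \emph{all} symmetric generating sets of size $N$, can only happen when $p-1$ or $p+1$ has a degenerate divisor structure --- concretely, a divisor in a forbidden window, or a smooth part that is too large. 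A Dickman-type sieve bounds the number of primes $p\le X$ subject to such a condition by $X^{\delta}$ once the windows are calibrated against $\delta$ and $N$; this produces $\Pi$ and completes the deduction.

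The main obstacle is precisely this last step: converting "the walk is stuck near $B$ or near $N(T^{\pm})$" into genuine rigidity on the multiplicative orders arising from $S$, uniformly over all generating sets of size $N$, and then running the sieve so that the exceptional set of primes is provably as small as $X^{\delta}$. By contrast, for a \emph{fixed} generating set the large-girth bound supplies non-concentration directly, and for a fixed prime of good arithmetic the Schreier spreading could be obtained by a Bourgain--Gamburd argument in its own right; the difficulty is to have both simultaneously and uniformly. The remaining pieces --- the flattening machine, the passage to $\P^1$ and to pair-spaces, the bounded-subgroup case, and the concluding quasirandomness estimate --- are by now standard.
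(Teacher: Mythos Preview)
The paper gives no proof of this statement: it is quoted from \cite[Theorem~1.1]{BG-strong-uniform}, so there is nothing in the paper itself to compare against. What follows compares your sketch with the actual Breuillard--Gamburd argument.

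Your scaffolding is correct and matches theirs: quasirandomness of $\SL_2(\F_p)$, the $L^2$-flattening lemma via Helfgott's product theorem and non-commutative BSG, reduction to a non-concentration estimate $\mu^{*m}(gH)\le p^{-\kappa}$ at some time $m\le C\log p$ uniformly over proper $H$, and the Dickson classification of such $H$. The propagation of non-concentration from time $m$ to later times and the concluding spectral estimate are also fine.

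The gap is in how you supply the non-concentration input uniformly over \emph{all} symmetric generating sets of size $N$. Breuillard and Gamburd obtain it from Breuillard's uniform Tits alternative (equivalently, his height gap theorem): there is $c(N)>0$ such that any finite symmetric $S$ in $\GL_2$ over an arbitrary field that does not generate a virtually solvable group of bounded complexity satisfies a uniform product-set growth bound. Transferred to $\SL_2(\F_p)$ this yields girth $\ge c(N)\log p$ for every generating $S$ with $|S|=N$, for $p$ outside a sparse exceptional set; large girth then gives the required non-concentration immediately. This deep input is absent from your sketch.

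Your proposed substitute --- argue via BSG that a walk on $\P^1(\F_p)$ or on the pair spaces which fails to spread must be trapped near the fixed locus of a small-order cyclic subgroup of $T^{\pm}$, hence the semisimple parts of elements of $S$ have orders confined to special divisors of $p\mp1$, hence $p\pm1$ has degenerate divisor structure, hence a Dickman sieve bounds the bad primes --- does not go through. Concentration of $\mu^{*m}$ on a Borel coset only says that many length-$m$ words in $S$ nearly fix a point of $\P^1$; it imposes no constraint on the orders of elements of $S$ or of $S^m$, since a Borel contains unipotents of order $p$ and semisimple elements of every order dividing $p-1$. No BSG-type argument manufactures a ``small-order torus'' conclusion from mere failure to spread, and the exceptional primes in Breuillard--Gamburd are not characterized by smoothness or divisor structure of $p\pm1$. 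The step you yourself flag as the main obstacle is a genuine missing idea, and it is precisely what the uniform Tits alternative is there to supply.
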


Theorem~\ref{theorem:main} above can be seen in this general context: while the spectral gap on $\ag$ is dependent on the choice of generators $S'$ for $\SL_d(\F_p)$, the estimate given by the theorem is uniform in the way $S'$ is lifted to a generating set $S$ on $\ag$.
 If one takes $S$ to be the projection mod $p$ of a fixed set
$\overline S\subset\Z^d\rtimes\SL_d(\Z^d)$ generating a (fixed) Zariski dense subgroup,
establishing a spectral gap (uniform in $p$) for the corresponding averaging operator
can be obtained by an  adaptation of the method of Bourgain and Gamburd
without introducing any substantial new ideas.
In particular, it  is a very special case of the main result of \cite[Theorem 1]{SGV-perfect}.

\subsection{Motivation}
One source of interest in the group $\ag$ stems from a continuous analogue of the problem.
In that analogue, the role of $\SL_d(\F_p)$ is played by the compact Lie group $\SO(d)$
and $\ag$ is replaced by $\R^d\rtimes\SO(d)$, the group of orientation preserving isometries of
Euclidean space.
In the paper \cite{LV-Euclidean}, we prove an analogue of Theorem \ref{theorem:main} in
that setting. This has two applications of independent interest in quite different directions:
\begin{itemize}
\item
Under the assumption that a corresponding random walk on $\SO(d)$ has spectral gap, we show that
a self-similar measure is absolutely continuous, provided the contraction coefficients of the self-similarities
are sufficiently close to 1.

\item In \cite{Var-Euclidean} a local-central limit theorem for a random walk on $\R^d$ by Euclidean isometries is proved. In \cite{LV-Euclidean} we strenghten this result when the underlying random walk on $\SO(d)$ has spectral gap to show that this local-central limit theorem holds at a scale exponentially small in the number of steps (when $d\geq3$ and the rotation part of the random walk generates a dense subgroup of $\SO(d)$, the local-central limit theorem is established in \cite{Var-Euclidean} only up to the scale $e^{-O(l^{1/3})}$ where $l$ is the number of steps).
\end{itemize}

\subsection{Ideas in the proof}

We heavily exploit the method of Bourgain and Gamburd in our proof of Theorem \ref{theorem:main}.
Note however that the product theorems of \cite{Hel-Sl2,Hel-Sl3,BGT-product-theorem,PS-product-theorem} are not used in the proof of Theorem \ref{theorem:main}, at least not directly; in their stead we use the assumed spectral gap of the averaging operator 
$\cL_0(\SL_d(\F_p),S')$ corresponding to the associated random walk on $\SL_d(\F_p)$.

We recall the essence of the method in Theorem \ref{theorem:BourgainGamburd} in Section
\ref{section:BourgainGamburd}.
To apply this theorem to the problem at hand, we need to show that the random walk does not concentrate
on cosets of subgroups isomorphic to $\SL_d(\F_p)$.
More precisely, we show that
\[
\P(Y_l\in A)\le 4p^{-d/4}
\]
if $A$ is a coset of a subgroup isomorphic to $\SL_d(\F_p)$ and $l\ge C\log p$ with a constant $C$ sufficiently
large.

This non-concentration estimate proved in Section \ref{section:decay} is the main new contribution in our paper.
It is essentially equivalent to proving a non-concentration estimate for the
random walk on $\F_p^d$ generated by $S$, which we do by showing the $L^2$-norm of the probability measure on $\F_p^d$ after $O_{d, S'}(\log p)$ many steps becomes small.

The proof of this fact rests upon an observation that if $\eta$ is an (arbitrary) probability measure
on $\F_p^d$ and the absolute value of its Fourier transform is almost constant in the appropriate sense on $\F_p^d\setminus \{0\}$, then the measure is
either spread out on $\F_p^d$ or most of the contribution to the $L^2$-norm of $\eta$ comes from a single atom;
cf.~Proposition \ref{proposition:nonconstant}.

Using this observation we argue iteratively: if the random walk on $\F_p^d$ after some steps concentrates the $L^2$-norm in a single atom, we can use that not
all elements of $S$ move this atom to the same point to show that the next step quantifiably reduces the $L^2$-norm.
If the Fourier transform does not have almost constant absolute value for all nonzero coefficients, we can use the spectral
gap for the projection to $\SL_d(\F_p^d)$, to prove that the next step quantifiably reduces the $L^4$
norm of the Fourier transform.

To carry out this argument, we have to work with both $\Ltwo$ and $\Lfour$ norms.
Therefore it will be necessary, to relate the $\Ltwo$ and $\Lfour$ spectral gaps.
This can be done using either Riesz-Thorin interpolation as we do here, or some classical results on $L^q$ spaces similarly to the paper \cite{BFGM-Lp} and the uniform convexity of $L^p$ spaces \cite[Chapter 9]{BL-book} as was done in an earlier version of this paper \cite{arxiv-version-one}.

The paper contains another new idea in Section \ref{section:growth}, where we prove a result about
growth of product-sets in the group $\ag$.
The result (Proposition \ref{proposition:growth})
itself is not new; similar statements appear in the papers \cite{SGV-perfect}
and \cite{PS-product-theorem}.
However, we give a new proof that can be adapted to work in the continuous case as done in \cite{LV-Euclidean}.

\subsection{An open problem}
An interesting analogue of the results we obtained here is provided by a random walk using a set $S$ of generators on the group $\SL_2(\F_p)\times\SL_2(\F_p)$.
Is it possible to estimate the spectral gap in terms of the spectral gaps
of the projections to the direct factors analogously to Theorem \ref{theorem:main}?

If one tries to prove such an estimate using the method of Bourgain and Gamburd then
the following problem arises.
The group $\SL_2(\F_p)\times\SL_2(\F_p)$ contains the subgroup $\{(g,g):g\in \SL_2(\F_p)\}$
and its conjugates, and it may happen that the random walk concentrates too much mass
on such a subgroup.
If this obstacle could be ruled out, then a positive solution to the above problem would follow
immediately from the method of Bourgain and Gamburd.
This difficulty is similar to the one we tackle in this paper, but its solution probably require a
different set of ideas.
We also mention that when one looks at the problem in the group $\SL_2(\F_{p_1})\times\SL_2(\F_{p_2})$
for different primes $p_1\neq p_2$, then the problem disappears, since all proper subgroups of
$\SL_2(\F_{p_1})\times\SL_2(\F_{p_2})$ projects into a proper subgroup of one of the factors.

\subsection{Organization}
In the next section, we introduce some more notation and state a more technical and somewhat stronger version
of Theorem \ref{theorem:main}.
In Section \ref{section:decay}, we prove the crucial non-concentration estimate mentioned above.
We recall the method of Bourgain and Gamburd in Section \ref{section:BourgainGamburd} and use
it to deduce our results.

\subsection*{Acknowledgment}
We are grateful to the referee and Lam Pham for carefully reading the paper and
for suggestions that significantly improved the presentation of the paper.

\section{Notation}
\label{section:notation}

For $(v_1,\t_1),(v_2,\t_2)\in\ag$, the product is defined by
\[
(v_1,\t_1)\cdot(v_2,\t_2)=(v_1+\t_1v_2,\t_1\cdot\t_2).
\]
If $g\in\ag$ can be written in the form $g=(v,\t)$ then we write $v(g)=v$ and $\t(g)=\t$.
In other words $v(g)$ and $\t(g)$ are the projections to the factors $\F_p^d$ and $\SL_d(\F_p)$
respectively.
We note that $v(g)$ is not intrinsically defined and we fix one choice for the entire paper.

The group $\ag$ naturally acts on $\F_p^d$ by means of the formula
$g.x=v(g)+\t(g)x$ for $g\in\ag$ and $x\in\F_p^d$.
This action is consistent with the above product law, i.e. we have the identity $(g_1\cdot g_2).x=g_1.(g_2.x)$.

It is easy to check that the inverse of an element $g\in\ag$ is given by the formula
\be\label{equation:inverse}
g^{-1}=(-\t(g)^{-1}.v(g),\t(g)^{-1}).
\ee

We identify measures on finite sets with their Radon-Nikodym derivative with respect to the counting measure.
Thus the difference between our use of the words function and measure is purely rhetoric.
With this convention we also write $f(A)=\sum_{x\in A}f(x)$, where $A$
is a finite set and $f$ is a function (or measure) defined on a finite set containing $A$.
A probability measure is a non-negative measure with total mass 1.
The Dirac delta measure concentrated at the point $x$ is a probability measure $\d_x$ such that
$\d_x(x)=1$ and $\d_x(y)=0$ if $y\neq x$.

Let $\mu$ be a measure on $\ag$.
We denote its $l$-fold convolution by
\[
\mu^{*(l)}=\underbrace{\mu*\ldots *\mu}_{l}.
\]
We denote the convolution of $\mu$ with a measure $\nu$ on $\F_p^d$ by
\[
[\mu.\nu](x)=\sum_{g\in G}\mu(g)\nu(g^{-1}.x)
\]
which is a measure on $\F_p^{d}$.

The left regular representation on $\ag$ is denoted by $\cL$ and the representation
obtained by composing the homomorphism $\t$ with the left regular representation
of  $\SL_d(\F_p)$ is denoted by $\cL^\t$.
They are defined by the formulas
\[
[\cL(g)f](h)=f(g^{-1}h)\quad{\rm and}\quad
[\cL^\t(g)f'](\s)=f'(\t(g)^{-1}\s)
\]
for $f\in\C[\ag]$, $f'\in\C[\SL_d(\F_p)]$, $g,h\in\ag$ and $\s\in\SL_d(\F_p)$.
In addition, we denote by $\cL_0$ and $\cL_0^\t$ the restrictions of $\cL$ and
$\cL^\t$ to the corresponding codimension one subspaces orthogonal to the constants.

Let $\pi$ be a representation of $\ag$ and $\mu$ a probability measure on $\ag$.
We write 
\[
\pi(\mu)=\sum_{g\in\ag}\mu(g)\pi(g)
\]
which is an operator acting on the relevant representation space.
Compare these with the definition of $\cL(G,S)$ in the previous section.

We can generalize the notion of the random walk by considering random elements
$X_l$ having an arbitrary common law $\mu$ instead of a uniform distribution on a
finite set $S$.
Note that with the above notation, the law of $Y_l$ is the probability measure
\[
\cL(\mu)^l\d_1=\mu^{*(l)}.
\]

Now we state a more general version of Theorem \ref{theorem:main}
with a slight improvement in the bound.

\begin{thm}\label{theorem:technical}
There is a constant $c$ depending only on $d$ such that the following holds.
Let $\mu$ be a probability measure on $\ag$.
Let $\a$ be the maximal probability for the event that
a random element $X\in\ag$ of law $\mu$
takes a given point $x\in\F_p^d$ to a given point $y\in\F_p^d$.
That is
\[
\a=\max_{x,y\in\F_p^d}\mu.\d_{x}(y).
\]
Then
\[
1-\|\cL_0(\mu)\|\ge c\min\{1-\|\cL_0^\t(\mu)\|,1-\a\}.
\]
\end{thm}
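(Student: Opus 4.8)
The plan is to combine the decomposition of $\C[\ag]$ afforded by the normal subgroup $\F_p^d$ with the Bourgain--Gamburd method, the genuinely new ingredient being a non-concentration estimate for the affine random walk on $\F_p^d$. I would begin with two reductions. First, replacing $\mu$ by $\mu*\wt\mu$ with $\wt\mu(g)=\mu(g^{-1})$, one may assume $\mu$ symmetric: $1-\|\cL_0^\t(\mu*\wt\mu)\|=1-\|\cL_0^\t(\mu)\|^2\ge1-\|\cL_0^\t(\mu)\|$, one has $[\mu*\wt\mu].\d_x(y)=\sum_z(\mu.\d_z)(x)(\mu.\d_z)(y)\le\a$ since $\sum_z(\mu.\d_z)(x)=1$, so $\a$ does not grow, and $1-\|\cL_0(\mu*\wt\mu)\|=1-\|\cL_0(\mu)\|^2$ lies within a factor $2$ of $1-\|\cL_0(\mu)\|$. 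Write $\k=\min\{1-\|\cL_0^\t(\mu)\|,1-\a\}$; if $\k=0$ there is nothing to prove (and $\k>0$ forces $\supp\mu$ to generate $\ag$). Second, let $P$ be the orthogonal projection of $\C[\ag]$ onto the functions constant on cosets of $\F_p^d$, equivalently the pullbacks of functions on $\SL_d(\F_p)$. Since $\F_p^d$ is normal, $P$ commutes with $\cL(\mu)$, the constants lie in the range of $P$, and on the range of $P$ the operator $\cL(\mu)$ agrees with $\cL^\t(\mu)$; hence $\|\cL_0(\mu)\|=\max\{\|\cL_0^\t(\mu)\|,\rho\}$ with $\rho:=\|\cL(\mu)|_{(1-P)\C[\ag]}\|$, and as $\|\cL_0^\t(\mu)\|\le1-\k$ it suffices to prove $\rho\le1-c\k$.

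The core is the non-concentration estimate. By conjugation invariance of the relevant quantities it is enough to study $\eta_l:=\mu^{*(l)}.\d_0$, the law of $v(Y_l)$ on $\F_p^d$, with $\eta_0=\d_0$ and $\eta_{l+1}=\mu.\eta_l$; the claim is that $\|\eta_l\|_{\Ltwo(\F_p^d)}\le5p^{-d/4}$ once $l\ge C\k^{-1}\log p$ for a suitable $C=C(d)$. Each step contracts $\Ltwo$, with defect
\[
\|\nu\|_2^2-\|\mu.\nu\|_2^2=\tfrac12\sum_{g,g'\in\ag}\mu(g)\mu(g')\,\|g_*\nu-g'_*\nu\|_2^2,
\]
where $g_*\nu$ is the push-forward of $\nu$ under $x\mapsto g.x$. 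To force a definite defect at each step I would run the Fourier dichotomy of Proposition~\ref{proposition:nonconstant}: passing to the Fourier transform on $\F_p^d$, where $\wh{\mu.\eta}(\xi)=\sum_g\mu(g)e(-\langle\xi,v(g)\rangle)\wh\eta(\tr{\t(g)}\xi)$, either $|\wh\eta_l|$ is far from constant on $\F_p^d\setminus\{0\}$, and then the pointwise bound $|\wh{\mu.\eta_l}(\xi)|\le\sum_g\mu(g)|\wh\eta_l(\tr{\t(g)}\xi)|$ together with the spectral gap of the $\SL_d(\F_p)$-averaging operator on the permutation module $\C[\F_p^d\setminus\{0\}]$ — a summand of the regular representation, so its gap, in both the $\Ltwo$ and (by the interpolation of Section~\ref{section:Lp}) the $\Lfour$ sense, is controlled by $1-\|\cL_0^\t(\mu)\|$ — makes the relevant norm of $\wh\eta_l$ drop by a factor $1-c(1-\|\cL_0^\t(\mu)\|)$; or $|\wh\eta_l|$ is essentially constant, and then Proposition~\ref{proposition:nonconstant} says $\eta_l$ is either already spread out (so $\|\eta_l\|_2\le5p^{-d/4}$ and we stop) or a single atom $z$ carries all but a small fraction of the $\Ltwo$-mass, in which case $\|\mu.\d_z\|_\infty\le\a$ forces $\sum_{g.z\ne g'.z}\mu(g)\mu(g')\ge1-\a$, and since the non-atom part of $\eta_l$ then has negligible $L^\infty$-norm, the displayed identity gives $\|\eta_{l+1}\|_2^2\le(1-c(1-\a))\|\eta_l\|_2^2$. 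In either case $p^d\|\eta_{l+1}\|_2^2-1\le(1-c\k)(p^d\|\eta_l\|_2^2-1)$, and iterating $C\k^{-1}\log p$ times drives $\|\eta_l\|_2$ below $5p^{-d/4}$; the $\Lfour$-norm of $\eta_l$ must be carried alongside the $\Ltwo$-norm for the induction to close, which is why both spectral gaps of $\cL_0^\t(\mu)$ are needed.

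To finish one feeds this into the Bourgain--Gamburd machinery of Section~\ref{section:BourgainGamburd}. The estimate above says that for $l\asymp\k^{-1}\log p$ the measure $\mu^{*(l)}$ gives mass at most $5p^{-d/4}$ to every coset of every subgroup of $\ag$ isomorphic to $\SL_d(\F_p)$ — exactly the point-stabilizers and their conjugates; the only other proper subgroups large enough to matter lie inside $\F_p^d\rtimes H$ with $H\lneq\SL_d(\F_p)$, and non-concentration on their cosets is immediate from the spectral gap $1-\|\cL_0^\t(\mu)\|$ of the projected walk. Running the $\Ltwo$-flattening argument with the growth statement Proposition~\ref{proposition:growth} in $\ag$ (which rests on the product theorem for $\SL_d(\F_p)$), one gets $\|\mu^{*(L)}\|_{\Ltwo(\ag)}^2\le|\ag|^{-1+\e}$ for $L\asymp\k^{-1}\log p$. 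Since every representation of $\ag$ occurring in $(1-P)\C[\ag]$ is induced from a non-trivial character of $\F_p^d$ and hence has dimension at least $p^d-1$, the standard quasirandomness argument converts this into $\rho\le1-c\k$ after taking $2L$-th roots; combined with $\|\cL_0^\t(\mu)\|\le1-\k$ this yields $\|\cL_0(\mu)\|\le1-c\k$.

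The step I expect to be the real obstacle is the non-concentration estimate, and inside it the dichotomy of Proposition~\ref{proposition:nonconstant}: deciding when a probability measure on $\F_p^d$ with nearly constant Fourier modulus off $0$ must either equidistribute or collapse to one atom, quantitatively enough to survive the iteration — and it is precisely here that the $\Ltwo$-versus-$\Lfour$ comparison for $\cL_0^\t(\mu)$ becomes unavoidable. Once that and Proposition~\ref{proposition:growth} are available, the flattening and quasirandomness steps are a by-now routine adaptation of Bourgain--Gamburd.
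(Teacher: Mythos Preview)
Your strategy is the paper's: the Fourier dichotomy of Proposition~\ref{proposition:nonconstant} drives the non-concentration estimate, and then Bourgain--Gamburd closes. The decomposition $\|\cL_0(\mu)\|=\max\{\|\cL_0^\t(\mu)\|,\rho\}$ via the projection $P$ is a clean reduction the paper does not make explicit. Two points, however, differ from the paper in ways that affect the outcome.

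First, a bookkeeping point: the claim ``in either case $p^d\|\eta_{l+1}\|_2^2-1\le(1-c\k)(p^d\|\eta_l\|_2^2-1)$'' does not hold as stated. In the ``no big atom'' branch it is $\|\psi_l\|_{\Lhfour}$ that contracts (via Proposition~\ref{proposition:L4}), not $\|\psi_l\|_{\Lhtwo}$; in the ``big atom'' branch it is $\|\eta_l\|_\Ltwo$. The paper (end of Section~\ref{section:decay}) tracks both quantities and bounds the number of steps of each type separately: $\|\psi_l\|_{\Lhfour}$ is monotone nonincreasing, so the number of ``no big atom'' steps before it falls below $4p^{-d/4}$ is controlled, while $\|\eta_l\|_\Ltwo\ge p^{-d/2}$ always bounds the number of ``big atom'' steps. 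You hint at this (``the $\Lfour$-norm must be carried alongside''), but a single contracting quantity is not available.

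Second, and more substantively, by symmetrizing only once you lose a power of $\k$. The $L^2\to L^4$ transfer of Lemma~\ref{lemma:L4L2} squares the defect: starting from an $L^2$ gap of size $\k$ one gets only $1-\|\pi(\mu)f\|_\Lfour\ge c\k^2\min_{|z|=1}\|f-z\|_\Lfour^8$ in place of Proposition~\ref{proposition:L4}, so the ``no big atom'' contraction is $(1-c\k^2)$, the non-concentration estimate then needs $l_1\asymp\k^{-2}\log p$ rather than $\k^{-1}\log p$, and Bourgain--Gamburd delivers only $1-\|\cL_0(\mu)\|\ge c\k^2$. The paper avoids this by a normalization you omit (Section~\ref{section:proof}): it sets $\mu_0=(\check\mu*\mu)^{l_0}$ with $l_0\asymp\k^{-1}$, shows (Lemma~\ref{lemma:mu0}) that $\|\cL_0^\t(\mu_0)\|\le\tfrac12$ and $\|\mu_0.\d_x\|_\Ltwo\le\tfrac34$, runs the entire non-concentration and Bourgain--Gamburd argument for $\mu_0$ with constants depending only on $d$ to obtain $\|\cL_0(\mu_0)\|\le e^{-c_d}$, and extracts $1-\|\cL_0(\mu)\|\ge c_d/(2l_0)\asymp c\k$ only at the very end from $\|\cL_0(\mu)\|^{2l_0}=\|\cL_0(\mu_0)\|$. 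This normalization step is precisely what recovers the linear $\k$-dependence claimed in the theorem.
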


In the setting of Theorem \ref{theorem:main}, for every $x,y\in\F_p$ there is
at least one element $g\in S$ such that $g.x\neq y$.
Indeed, in the opposite case, $S$ would be contained in a coset of a subgroup isomorphic
to $\SL_d(\F_p)$.
Thus $\a\le1-|S|^{-1}$, and Theorem \ref{theorem:technical} indeed contains Theorem \ref{theorem:main}
as a special case.
In the rest of the paper we prove Theorem \ref{theorem:technical}.

\section{Non-concentration on subgroups}
\label{section:decay}

In this section, we make stronger assumptions on $\mu$ than in Theorem~\ref{theorem:technical},
but we will see in Section \ref{section:proof} that the general case can be reduced to this one.
Let $v_0\in\F_p^d$ be an arbitrary point, and
consider the sequence of probability measures $\eta_l=\mu^{*(l)}.\d_{v_0}$.
These measures can be thought of as the laws of the steps of a random walk on
$\F_p^d$ starting from the point $v_0$, and the steps being made by applying a random
element of $\ag$ with law $\mu$.

The purpose of this section is to prove the following result.
\begin{prp}\label{proposition:decay}
Suppose that $\mu$ is symmetric and
\[
\|\mu.\d_{x}\|_\Ltwo\le\frac34
\]
for all $x\in\F_p^d$.
Suppose further that
\[
\|\cL_0^\t(\mu)\|\le\frac{1}{2}.
\]
Then for $l=\lfloor 2^{15}d\log p\rfloor$, we have
\[
\|\eta_l\|_{L^\infty}\le\|\eta_l\|_\Ltwo\le4p^{-d/4}.
\]
\end{prp}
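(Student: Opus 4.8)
The plan is to track the $L^2$-norm (equivalently, the $\ell^2$-mass) of the measures $\eta_l$ on $\F_p^d$ as $l$ grows, and show that it decays geometrically until it reaches the floor $\sim p^{-d/4}$. The main tool will be the Fourier-analytic dichotomy of Proposition~\ref{proposition:nonconstant}, applied to $\eta_l$: either the absolute value of $\wh\eta_l$ is almost constant on $\F_p^d\setminus\{0\}$, or the $L^2$-mass is concentrated in a single atom, or the measure is already spread out. First I would record the basic iteration: since $\eta_{l+1}=\mu.\eta_l$ and $\mu$ is symmetric, one has the standard monotonicity $\|\eta_{l+1}\|_\Ltwo\le\|\eta_l\|_\Ltwo$, so we only need to produce, at each stage where $\|\eta_l\|_\Ltwo$ is still above the target, a later step where a definite proportional decrease occurs. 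Equivalently, using Fourier transform on $\F_p^d$, one works with $\sum_{\xi\ne0}|\wh\eta_l(\xi)|^2$ and exploits that convolving by $\mu$ multiplies each Fourier coefficient by a factor coming from the action of the linear parts $\t(g)$ on the character group; the hypothesis $\|\cL_0^\t(\mu)\|\le\tfrac12$ gives contraction \emph{on average over $\xi$} whenever the Fourier weight is not too concentrated on a single orbit of $\SL_d(\F_p)$ on $\wh\F_p^d$.

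The argument then splits into the two cases of the dichotomy. Case~1: $|\wh\eta_l|$ is roughly constant on $\F_p^d\setminus\{0\}$. Then by Proposition~\ref{proposition:nonconstant} either $\|\eta_l\|_\Ltwo$ is already $\lesssim p^{-d/4}$ (we are done), or most of the $\ell^2$-mass of $\eta_l$ sits on one atom $\d_{x}$. In the latter subcase I would use the hypothesis $\|\mu.\d_x\|_\Ltwo\le\tfrac34$ for every $x$: applying $\mu$ once more smears that atom out, so $\|\eta_{l+1}\|_\Ltwo\le(\tfrac34+o(1))\|\eta_l\|_\Ltwo$ plus a controlled error, giving a constant-factor drop. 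Case~2: $|\wh\eta_l|$ is \emph{not} almost constant on $\F_p^d\setminus\{0\}$. Then the Fourier weight is genuinely spread over different $\SL_d(\F_p)$-orbits (or varies within scales), and convolving by $\mu$ and invoking $\|\cL_0^\t(\mu)\|\le\tfrac12$ contracts $\sum_{\xi\ne0}|\wh\eta(\xi)|^2$ by a fixed factor bounded away from $1$. Either way we gain a uniform multiplicative decrease $\|\eta_{l+c_0}\|_\Ltwo\le(1-\d)\|\eta_l\|_\Ltwo$ as long as we are above the floor, so after $O(\log(\|\eta_0\|_\Ltwo/p^{-d/4}))=O(\log p)$ steps we reach $\|\eta_l\|_\Ltwo\le5p^{-d/4}$; the explicit constant $2^{47}d$ comes from making the implied constants in this count precise. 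Finally $\|\eta_l\|_{L^\infty}\le\|\eta_l\|_\Ltwo$ is immediate from our normalization of measures (an $L^\infty$-mass exceeding $t$ would contribute $t^2$ to $\|\eta_l\|_\Ltwo^2$ after dividing by the measure of a point, or directly: $\eta_l(x)=\langle\eta_l,p^d\d_x\rangle$... more simply, for the counting-measure normalization used here, $\|f\|_{L^\infty}\le\|f\|_{L^2}$ whenever the ground set has measure $\ge1$ at each point is false in general, so one argues instead that $\eta_l(x)^2\le\sum_y\eta_l(y)^2=\|\eta_l\|_{\Ltwo}^2$ since each $\eta_l(y)\ge0$).

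The main obstacle I anticipate is Case~2, i.e.\ converting the qualitative statement ``$|\wh\eta_l|$ is not almost constant'' into a quantitative gain. One must be careful that the spectral gap hypothesis $\|\cL_0^\t(\mu)\|\le\tfrac12$ controls averaging over $\SL_d(\F_p)$ acting on functions on $\SL_d(\F_p)$, and one needs to transfer this to the induced action on the $\SL_d(\F_p)$-orbits in $\wh\F_p^d\setminus\{0\}$. Since all nonzero characters of $\F_p^d$ form (essentially) a single transitive $\SL_d(\F_p)$-orbit of size $p^d-1$ when $d\ge2$, the representation of $\SL_d(\F_p)$ on $\C[\wh\F_p^d\setminus\{0\}]$ is a sub-representation of (a multiple of) the regular representation minus trivial, so the $\tfrac12$-bound does apply — but pinning down the exact comparison of norms, tracking the $0$-coefficient $\wh\eta_l(0)=p^{-d}$ separately (it never decays), and ensuring that the ``almost constant'' threshold chosen to invoke Proposition~\ref{proposition:nonconstant} is compatible with the contraction factor extracted here, is where the real bookkeeping lies. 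A secondary subtlety is handling the boundary behavior: when $\|\eta_l\|_\Ltwo$ is close to $p^{-d/4}$ the error terms in the atom-smearing step (Case~1) are no longer negligible relative to the main term, so the stopping criterion and the constant $5$ in $5p^{-d/4}$ must be set to absorb them.
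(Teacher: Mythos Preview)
Your two-case strategy (atom concentration versus non-constant Fourier modulus) and your treatment of the atom case via the hypothesis $\|\mu.\d_x\|_\Ltwo\le\tfrac34$ match the paper exactly. The gap is in your Case~2. You assert that when $|\psi_l|:=|\wh\eta_l|\!\upharpoonright_X$ is ``not almost constant'' the spectral-gap hypothesis $\|\cL_0^\t(\mu)\|\le\tfrac12$ contracts $\sum_{\xi\ne0}|\wh\eta_l(\xi)|^2$ by a fixed factor. But the only non-constancy information Proposition~\ref{proposition:nonconstant} supplies is in $\Lhfour$: it says $\bigl\||\psi_l|/\|\psi_l\|_{\Lhfour}-{\rm const}\bigr\|_{\Lhfour}\ge\tfrac1{16}$. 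On a (sub)probability space one has $\|\cdot\|_{\Lhtwo}\le\|\cdot\|_{\Lhfour}$, so $\Lhfour$-far from constant does \emph{not} imply $\Lhtwo$-far from constant; for instance, if $|\psi_l|$ equals $1$ everywhere on $X$ except at a single point where it is of order $p^{d/4}$, then $|\psi_l|$ is $\Lhfour$-far from constant while $\||\psi_l|-c\|_{\Lhtwo}^2$ is only of order $p^{-d/2}\ll\||\psi_l|\|_{\Lhtwo}^2$, and the $\Ltwo$-contraction you would extract from the spectral gap is negligible. So the step ``not almost constant $\Rightarrow$ fixed $L^2$-contraction'' fails as written.

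The paper does not try to repair this in $L^2$. Instead it tracks $\|\psi_l\|_{\Lhfour}$ (not $\|\eta_l\|_\Ltwo$) in the non-atom case and proves $\|\psi_{l+1}\|_{\Lhfour}\le(1-2^{-48})\|\psi_l\|_{\Lhfour}$ there. The bridge from the $L^2$ spectral-gap hypothesis to an $\Lhfour$ contraction is Proposition~\ref{proposition:L4}, whose proof uses the Mazur map $\phi(f)=|f|^2\sign(f)$ between $S(\Lhfour)$ and $S(\Lhtwo)$ together with Clarkson's uniform convexity of $L^4$; this is the content of Section~\ref{section:Lp} and is a genuine analytic ingredient, not bookkeeping. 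Once $\|\psi_k\|_{\Lhfour}\le4p^{-d/4}$ one passes to $\|\psi_k\|_{\Lhtwo}$ by Cauchy--Schwarz and then to $\|\eta_k\|_\Ltwo$ by Plancherel. Your last paragraph correctly flags this spot as the main obstacle, but the resolution requires Proposition~\ref{proposition:L4}, which your proposal does not invoke.
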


One can interpret this proposition as a non-concentration estimate.
Indeed, the set $A_{v_0,u_0}\subset\ag$ consisting of elements $g$ with the
property $g.v_0=u_0$ is a coset of a subgroup isomorphic to $\SL_d(\F_p)$.
Moreover,
\[
\P(Y_l\in A_{v_0,u_0})=\mu^{*(l)}(A_{v_0,u_0})=\eta_l(u_0),
\]
which is estimated in the proposition.

We keep all constants explicit in this section for the sake of clarity, but
we make no efforts to optimize them.

\subsection{Some properties of the Fourier transform}
\label{section:nonconstant}

We introduce some notation related to the Fourier transform on $\F_p^d$
and some conventions for normalization.
We denote the vector space of complex valued functions on $\F_p^d$ by $\CF$.
Let $f\in\CF$ and define its Fourier transform by
\[
\wh f(\xi)=\sum_{x\in\F_p^d} e(\langle x,\xi\rangle)f(x),
\]
where $e(y)=e^{-2\pi iy/p}$ for $y\in \Z/p\Z=\F_p$.

We distinguish the space on which the Fourier transform is defined denoting it by $\wh\F_p^d$.
This space is of course isomorphic to $\F_p^d$, but we make this distinction in our notation because
it will be convenient for us to use different normalizations for the $L^q$ norms of functions
on these two spaces.
For functions $f\in\CF$  and $\f\in\CFh$, we define these norms by
\[
\|f\|_\Lq:=\left(\sum_{x\in\F_p^d}f(x)^q\right)^{1/q}\quad{\rm and}\quad
\|\f\|_{\Lhq}:=\left(\frac{1}{p^d}\sum_{\xi\in\wh\F_p^d}\f(\xi)^q\right)^{1/q}.
\]
With this normalization, Plancherel's formula becomes $\|f\|_\Ltwo=\|\wh f\|_\Lhtwo$.

Remove the origin $0$ from the set $\wh\F_p^d$ and denote it by $X$. Let $\iota$ be the extension map from functions on $X$ to functions on $\F_p^d$, i.e.\ $\iota (f)[x]= f (x)$ for $x \in\wh\F_p^d \setminus 0$ and $\iota (f)[0]=0$, and $\iota ^*$ the natural projection from functions on $\wh\F_p^d$ to functions on $X$(we will also  use $\iota,\iota^*$ to denote the exact same maps between $L ^ q (\F_p ^ d \setminus 0)$ and $L ^ q (\F_p ^ d)$).
The norm $\Lhq$ on $X$ is defined as $\|\f\|_{\Lhq}=\|\iota (f)\|_{\Lhq}$, i.e.\
the total mass of the measure on $X$ with respect to which the $\Lhq$-norms
are defined is $(p^d-1)/p^d$.

Let $\cA$ denote the natural actions of $\F_p^d \rtimes \SL_d(\F_p)$ on both $\Lq$ and $\Lhq$, and $\cA^\t$ the corresponding action of the linear part of $\F_p^d \rtimes \SL_d(\F_p)$ on $\Lhq$. Note that for any $f \in \Lhq$,\ $g \in \F_p^d \rtimes \SL_d(\F_p)$, and $x \in \F_p^d $
\[
\absolute{\left(\cA(g) f\right) (x)}=\left(\cA^\t(g)|f|\right)(x).
\]
It would sometimes be useful to think of $\cA^\t$ also as an action of the group $\SL_d(\F_p)$.

The purpose of this section is to prove the following estimate,
showing that if $\eta$ is a probability measure on $\F_p^d$
whose $L^2$-norm is not too concentrated at a single atom
and the nontrivial Fourier coefficients of $\eta$ are not very
small then the absolute value of the nontrivial Fourier coefficients of $\eta$ cannot be almost constant.

\begin{prp}\label{proposition:nonconstant}
Let $\eta$ be a probability measure on $\F_p^d$.
Suppose that
\[
\eta(x)\le \frac{40}{41}\| \eta \|_\Ltwo \qquad\text{for every $x \in\F_p^d$}
\]
and
\begin{equation}\label{lower bound on L4}
\norm {\wh \eta}_{\Lhfour}\ge 4p^{-d/4}.
\end{equation}
Then there is an $h \in \SL _ d (\F_p)$ so that
\[
\norm {\left(\absolute {\wh \eta}-\cA^\t(h)\absolute {\wh \eta}\right)}_{\Lhfour}\ge \frac{7}{100} \norm {\wh \eta}_{\Lhfour}.
\]
\end{prp}

A key ingredient is the use of Plancherel's formula for the measure
$\eta*\check\eta$, where $\check\eta$ denotes the probability measure on $\F_p^d$
given by the formula:
\[
\check\eta(x)=\eta(-x).
\]
Observe that the Fourier transform of $\eta*\check\eta$ is $|\wh\eta|^2$ and we need to show that it is
not too close to constant.
This is equivalent to $\eta*\check\eta$ being far from a Dirac measure supported at 0.
This latter property of $\eta*\check\eta$ is proved in the next Lemma.

\begin{lem}\label{lemma:no-mass-origin}
Let $\eta$ be a probability measure on $\F_p^d$, and
suppose that
\[
\eta(x)\le\frac{40}{41}\|\eta\|_\Ltwo\qquad\text{for every $x\in\F_p^d$.}
\]
Then
\[
\|\iota^*(\eta*\check{\eta})\|_\Ltwo^2\ge\frac{1}{42}\|\eta*\check{\eta}\|_\Ltwo^2.
\]
\end{lem}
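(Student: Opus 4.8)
The plan is to reduce the claim to a clean lower bound on $\|\eta*\check\eta\|_\Ltwo^2$ coming from the ``trivial'' self-correlations of $\eta$, using the hypothesis only at the very end. Write $\mu=\eta*\check\eta$ and $Q=\|\eta\|_\Ltwo^2=\sum_x\eta(x)^2$. First I would observe that $\mu(0)=\sum_y\eta(y)\check\eta(-y)=\sum_y\eta(y)^2=Q$, so that $\|\mu\|_\Ltwo^2=Q^2+\sum_{x\neq0}\mu(x)^2$; consequently the asserted inequality $\sum_{x\neq0}\mu(x)^2\ge\tfrac1{50}\|\mu\|_\Ltwo^2$ is equivalent to $\sum_{x\neq0}\mu(x)^2\ge\tfrac1{49}Q^2$, and since $\tfrac1{49}<\tfrac{81}{1681}$ it is enough to prove the slightly stronger estimate $\sum_{x\neq0}\mu(x)^2\ge\tfrac{81}{1681}Q^2$.

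The main step is to recognise $\|\mu\|_\Ltwo^2$ as the additive energy of $\eta$. Expanding $(\eta*\check\eta)(x)=\sum_y\eta(y)\eta(y-x)$ and reindexing the resulting quadruple sum, one gets $\|\mu\|_\Ltwo^2=\sum_{a-b=c-d}\eta(a)\eta(b)\eta(c)\eta(d)=:E(\eta)$, the sum being over quadruples $(a,b,c,d)\in(\F_p^d)^4$ with $a-b=c-d$. I would then bound $E(\eta)$ from below by retaining only the quadruples with $a=b$ (which forces $c=d$) together with those with $a=c$ (which forces $b=d$): each of these two families of terms sums to exactly $Q^2$, the two families overlap precisely in the fully diagonal quadruples $a=b=c=d$, whose terms sum to $\|\eta\|_\Lfour^4$, and every other term of $E(\eta)$ is nonnegative because $\eta\ge0$. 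Inclusion--exclusion therefore gives $E(\eta)\ge 2Q^2-\|\eta\|_\Lfour^4$, hence $\sum_{x\neq0}\mu(x)^2=E(\eta)-Q^2\ge Q^2-\|\eta\|_\Lfour^4$.

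It remains to control $\|\eta\|_\Lfour^4$. I would use the crude bound $\|\eta\|_\Lfour^4=\sum_x\eta(x)^4\le(\max_x\eta(x))^2\sum_x\eta(x)^2=(\max_x\eta(x))^2Q$, and now insert the hypothesis $\max_x\eta(x)\le\tfrac{40}{41}\|\eta\|_\Ltwo=\tfrac{40}{41}Q^{1/2}$ to obtain $\|\eta\|_\Lfour^4\le\tfrac{1600}{1681}Q^2$. Combining, $\sum_{x\neq0}\mu(x)^2\ge\bigl(1-\tfrac{1600}{1681}\bigr)Q^2=\tfrac{81}{1681}Q^2$, which is exactly what was needed. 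There is no serious obstacle here; the one place to be careful is the inclusion--exclusion in the energy estimate, where the fully diagonal quadruples must not be counted twice. It is worth noting that the argument uses only the nonnegativity of $\eta$ and the atom bound (not even the normalisation $\sum_x\eta(x)=1$), and that the numerical slack between $\tfrac{81}{1681}$ and $\tfrac1{49}$ is comfortable.
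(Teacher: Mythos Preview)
Your proof is correct and follows essentially the same route as the paper: both arguments reduce to the inequality $\sum_{x\neq 0}(\eta*\check\eta)(x)^2\ge Q^2-\|\eta\|_\Lfour^4$ with $Q=\|\eta\|_\Ltwo^2$, obtained by keeping the diagonal terms in the expansion of the square, and then bound $\|\eta\|_\Lfour^4\le(\max_x\eta(x))^2 Q\le\tfrac{1600}{1681}Q^2$ using the hypothesis. Your packaging via the additive energy $E(\eta)$ and inclusion--exclusion on the two diagonal families $\{a=b\}$ and $\{a=c\}$ is just a reformulation of the paper's direct expansion (the $\{a=b\}$ family is precisely the $x=0$ term you subtract at the end), and in fact your version avoids a harmless stray factor of $\tfrac12$ in the paper's display.
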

\begin{proof}
By simple calculation:
\bea
\|\iota^*(\eta*\check{\eta})\|_\Ltwo^2&\equiv&\sum_{x\neq0}(\eta*\check{\eta})(x)^2
=\sum_{x\neq0}\left[\sum_{y,z:y-z=x}\eta(y)\eta(z)\right]^2\nonumber\\
&\ge&\sum_{x\neq0}\sum_{y,z:y-z=x}[\eta(y)\eta(z)]^2
=\sum_{y,z:y\neq z}\eta(y)^2\eta(z)^2\nonumber\\
&=&\frac{1}{2}\left[\left(\sum_{y\in\F_p^d}\eta(y)^2\right)^2-\sum_{y\in\F_p^d}\eta(y)^4\right]
\label{equation:origin1}
\eea
Using the assumption in the lemma:
\be\label{equation:origin2}
\sum_{y\in\F_p^d}\eta(y)^4
\le\max_{y\in\F_p^d}\{\eta(y)^2\}\sum_{y\in\F_p^d}\eta(y)^2
\le\frac{1600}{1681}\left[\sum_{y\in\F_p^d}\eta(y)^2\right]^2
\ee

{}From inequalities \eqref{equation:origin1} and \eqref{equation:origin2}
we get
\[
\sum_{x\neq0}(\eta*\check{\eta})(x)^2\ge\frac{1}{42}\|\eta\|_\Ltwo^4
\]
because $1/42<(1-1600/1681)/2$.
Thus
\[
\eta*\check{\eta}(0)^2=\left[\sum_{x\in\F_p^d}\eta(x)^2\right]^2=\|\eta\|_\Ltwo^4
\le 42 \sum_{x\neq0}(\eta*\check\eta)(x)^2,
\]
which implies the claim.
\end{proof}

A tool which will help us relate the $L^2$ and $L^4$ norm is the Mazur map.
We recall its definition and properties from the book of Benyamini and
Lindenstrauss \cite[Chapter 9]{BL-book}.
We denote by $S(L^q)$ the unit sphere of the space $L^q(X)$.
For $f\in S(L^4)$, the Mazur map is defined by
\[
\phi(f)=|f|^2\sign(f),
\]
where $\sign(f)=f/|f|$ for $f\neq 0$ and $0$ otherwise.

The Mazur map is a homeomorphism from  $S(L^4)$ to $S(L^2)$.
Moreover, we have the following inequalities.
\begin{AbcTheorem}[{\cite[Theorem 9.1]{BL-book}}]\label{theorem:Mazur}
For $f_1,f_2\in S(L^4)$, we have
\[
\|f_1-f_2\|_\Lfour\ge\frac{1}{2}\|\phi(f_1)-\phi(f_2)\|_\Ltwo.
\]
\end{AbcTheorem}
For proof, see \cite[Proof of Theorem 9.1]{BL-book}, applied to $p=4$ and $q=2$.

\begin{proof}[Proof of Proposition \ref{proposition:nonconstant}]
The existence of a $h \in \SL_d(\F_p)$ as in the proposition follows from the following estimate:
\begin{multline}\label{using Mazur}
\tfrac {1 }{ \#\SL_d(\F_p)} \!\sum_ {h \in\SL_d(\F_p)} \frac{\norm {\left (\absolute {\wh \eta} - \cA^\t(h)\absolute {\wh \eta} \right)} _ \Lhfour }{\norm {\wh \eta}_ \Lhfour }\\
\begin{aligned}
 & \ge
\tfrac {1 }{ 2\#\SL_d(\F_p)}\! \sum_ {h \in\SL_d(\F_p)} \frac{\norm {\left (\absolute {\wh \eta} ^2 - \cA^\t(h)\absolute {\wh \eta} ^2 \right)} _ \Lhtwo}{\norm {|\wh \eta|^2}_ \Lhtwo}  \\
&\ge \frac{\Bigl\|\absolute {\wh \eta} ^2 - \tfrac {1 }{ \#\SL_d(\F_p)}\! \sum_ {h \in\SL_d(\F_p)} \cA^\t(h)\absolute {\wh \eta} ^2 \| _ \Lhtwo}{2\norm {|\wh \eta|^2}_ \Lhtwo}\\
&\ge \frac{\norm {\iota ^{*} (\absolute {\wh \eta} ^2) - c}_{\Lhtwo(X)}}{2\norm {|\wh \eta|^2}_ \Lhtwo}
\end{aligned}
\end{multline}
for $c = (p ^ d -1) ^{-1} \sum_ {x \ne 0} \absolute {\wh \eta (x)} ^2$; in particular, $0 \le c \le 1$. Note that Theorem \ref{theorem:Mazur} was used to pass from the first to the second line in \eqref{using Mazur}.

Since $\wh \eta (0) = 1$, and using \eqref{lower bound on L4}, we have that
\begin{align*}
\norm {\iota ^{*} (\absolute {\wh \eta} ^2) - c}_{\Lhtwo(X)} ^2& = \norm {\absolute {\wh \eta} ^2 - c}_{\Lhtwo} ^2 - p ^ {- d} (\absolute {\wh \eta(0)} ^2 - c) ^2 \\
& \ge \norm {\absolute {\wh \eta} ^2 - c}_{\Lhtwo} ^2 - p ^ {- d} \\
& = \norm {\eta*\check{\eta}-c \delta _ 0} _ {L ^2} ^2 - p ^ {- d} \\
& \ge  \norm {\iota ^{*} (\eta*\check{\eta})} _ {L ^2} ^2 - p ^ {- d} \\
& \ge \left (\frac {1 }{ 42} - \frac {1 }{ 256} \right) \norm {\eta*\check{\eta}} _ {L ^2} ^2
\end{align*}
since $\norm {\eta*\check{\eta}} _ {L ^2} = \norm {\absolute {\wh \eta} ^2}_{\Lhtwo} = \norm {\wh \eta}_ \Lhfour^2 \ge 16 p^{-d/2}$.
In the last line, we used Lemma \ref{lemma:no-mass-origin}.
Using equation \eqref{using Mazur} we can now conclude that there is some $h$ so that
\begin{equation*}
\frac{\norm {\left (\absolute {\wh \eta} - \cA^\t(h)\absolute {\wh \eta} \right)} _ \Lhfour }{\norm {\wh \eta}_ \Lhfour } \ge \frac {\sqrt {1/42-1/256}}{2} \ge \frac {7 }{ 100}
\end{equation*}
establishing the proposition.\end{proof}

\subsection{A consequence of the Riesz-Thorin interpolation theorem}
We will now use the Riesz Thorin interpolation theorem to study how~${\cA^\t}(\mu)$ acts on the Fourier transform of measures on $\F_p ^ d$ with respect to the ${\Lhfour}$-norm.

\begin{prp}\label{prp:L4 decay} Let $\mu$ be a probability measure on $\SL _ d (\F_p)$ satisfying the conditions of Proposition~\ref{proposition:decay} and $\eta$ a probability measure on $\F_p ^ d$ satisfying the conditions in Proposition~\ref{proposition:nonconstant}. Then
\begin{equation*}
\norm {{\cA^\t} (\mu^{*(5)}) \absolute {\wh\eta}} _ {{\Lhfour}} \le e^{-5\cdot 2^{-14}}\norm{\wh\eta}_{\Lhfour}
.\end{equation*}
\end{prp}

\begin{lem}\label{lemma: uniform convexity} Let $f, g$ be nonnegative functions on a $\sigma$-finite measure space. Then
\begin{equation*}
{\tfrac{1}{2}} (\norm f ^ 4 _ {L ^ 4} + \norm g ^ 4 _ {L ^ 4} )\ge \norm {\frac {f + g }{ 2}} ^ 4 _ {L ^ 4} +7 \norm {\frac {f - g }{ 2}} ^ 4 _ {L ^ 4}
.\end{equation*}
\end{lem}
\begin{proof}
This follows easily from the inequality
\begin{equation*}
\frac {1+x^4 }{ 2} \ge \left (\frac {1+x }{ 2} \right)^4+7\left (\frac {1-x }{ 2} \right)^4
\end{equation*}
which is valid for $x \ge 0$.
\end{proof}

\begin{proof}[Proof of Proposition~\ref{prp:L4 decay}]
Consider for $h \in \SL _ d (\F_p)$ the operator \[({\cA^\t} (h) -1) {\cA^\t} (\mu ^ {*(10)}),\]
where $1$ denotes the identity operator.
The assumption $\norm {{\cL_0^\t} (\mu)}_{\Ltwo} \le {\tfrac{1}{2}}$ implies
$\norm {{\cA_0^\t} (\mu)}_{\Ltwo} \le {\tfrac{1}{2}}$.
Since $\cA^\t(h)-1$ annihilate the constants and it has $L^2$ and $L^\infty$ norm at most $2$,
we have that
\begin{align*}
\norm {({\cA^\t} (h) -1) {\cA^\t} (\mu ^ {*(10)})}_{\Lhtwo} &\le 2^{-9}\\
\norm {({\cA^\t} (h) -1) {\cA^\t} (\mu ^ {*(10)})}_{\wh L^\infty} &\le 2.
\end{align*}
Hence by interpolation
\begin{equation*}
\norm {({\cA^\t} (h) -1) {\cA^\t} (\mu ^ {*(10)})}_{\Lhfour} \le 2^{-4}.
\end{equation*}
For any $h \in \SL _ d (\F_p)$,
\begin{align*}
\norm {{\cA^\t} (h) \absolute {\wh \eta} -\absolute {\wh \eta}}_{\Lhfour}&
\le
\Bigl\|{\cA^\t} (h)  {\cA^\t} (\mu ^ {*(10)})\absolute {\wh \eta}
-{\cA^\t} (\mu ^ {*(10)})\absolute {\wh \eta}\Bigr\|_{\Lhfour} +\\
&\qquad+ 2 \norm {{\cA^\t} (\mu ^ {*(10)}) \absolute {\wh \eta} - \absolute {\wh \eta}} _ {\Lhfour}\\
&\le
2^{-4} \norm {\wh\eta}_{\Lhfour}
+ 2 \norm {{\cA^\t} (\mu ^ {*(10)}) \absolute {\wh \eta} - \absolute {\wh \eta}} _ {\Lhfour}
\end{align*}
hence using Proposition~\ref{proposition:nonconstant}
there is a $h \in \SL _ d (\F_p)$ so that
\begin{align}\label{the 43 equation}
\norm {{\cA^\t} (\mu ^ {*(10)}) \absolute {\wh \eta} - \absolute {\wh \eta}} _ {\Lhfour} &\ge
\frac 12
\Bigl (\frac{7}{100}-\frac{1}{16}\Bigr) \norm {\wh \eta} _ {\Lhfour} = \frac{3}{800} \norm {\wh \eta} _ {\Lhfour}
.\end{align}
As $\mu$ is symmetric,
\begin{multline}\label{using symmetry}
\norm {{\cA^\t} (\mu ^ {*(10)}) \absolute {\wh \eta} - \absolute {\wh \eta}} _ {\Lhfour}\\  \le\sum_ {g, g '} \mu ^ {*(5)} (g) \mu ^ {*(5)} (g ') \norm {({\cA^\t} (g) - {\cA^\t} (g ')) \absolute {\wh \eta}} _ {\Lhfour}
.\end{multline}
But by Lemma~\ref{lemma: uniform convexity}, for any nonnegative $f \in {\Lhfour}$
\begin{equation*}
\norm {\frac {({\cA^\t} (g) + {\cA^\t} (g ')) f }{ 2}} _ {\Lhfour} ^ 4 \le \norm {f} _ {\Lhfour} ^ {4} - \frac {7 }{ 16} \norm {({\cA^\t} (g)  - {\cA^\t} (g '))f} _ {\Lhfour}^4
,\end{equation*}
hence as $(1-x)^{1/4} \le 1 - x/4$ for $0 \le x \le 1$
\begin{equation}\label{the 7/256 equation}
\norm {\frac {({\cA^\t} (g) + {\cA^\t} (g ')) f }{ 2}} _ {\Lhfour} \le \norm {f} _ {\Lhfour}  - \frac {7 }{ 64} \norm {({\cA^\t} (g)  - {\cA^\t} (g '))f} _ {\Lhfour}
.\end{equation}
Applying \eqref{the 43 equation}, \eqref{using symmetry} and \eqref{the 7/256 equation} it follows that
\begin{multline*}
\norm{{\cA^\t}(\mu ^ {*(5)})\absolute {\wh \eta}}_{\Lhfour} \le \sum_ {g, g '}  \mu ^ {*(5)} (g) \mu ^ {*(5)} (g ') \norm {\left(\frac{{\cA^\t} (g) + {\cA^\t} (g ')}2\right) \absolute {\wh \eta}} _ {\Lhfour}\\
\begin{split}
& \le
\norm {\wh \eta} _ {\Lhfour} - \frac {7}{64} \sum_ {g, g '}  \mu ^ {*(5)} (g) \mu ^ {*(5)} (g ') \norm {({\cA^\t} (g) - {\cA^\t} (g ')) \absolute {\wh \eta}} _ {\Lhfour} \\
& \le \left (1 - \frac {21}{ 64\cdot 800} \right) \norm{\wh \eta} _ {\Lhfour} \\
& \le e^{- 5\cdot 2^{-14}} \norm{\wh \eta} _ {\Lhfour}
.\end{split}
\end{multline*}
\end{proof}

\subsection{Finishing the proof of Proposition~\ref{proposition:decay}}

We consider two cases.
First we suppose that $\eta_l$ does not concentrate too big mass
on a single atom, that is:
\be\label{equation:case1}
\eta_l(x)\le\frac{40}{41}\|\eta_l\|_\Ltwo
\ee
for all $x\in\F_p^{d}$.
If this is the case,  we can apply Proposition \ref{prp:L4 decay}
and conclude either
\begin{equation*}
\norm{\wh\eta_{l+5}}_{\Lhfour}\le \norm {{\cA^\t} (\mu^{*(5)}) \absolute {\wh\eta}} _ {{\Lhfour}} \le e^{-5\cdot 2^{-14}}\norm{\wh\eta}_{\Lhfour}
.\end{equation*}
or $\|\wh\eta_l\|_\Lhfour\le4p^{-d/4}$.

On the other hand, we always have the trivial inequality $\|\wh\eta_{l+1}\|_\Lhfour\le\|\wh\eta_l\|_\Lhfour$.
Thus if we have $\|\wh\eta_k\|_\Lhfour\ge 4p^{-d/4}$ for some integer $k>0$, then
there are at most
\[
2^{14}d\log p
\]
many nonnegative integers $l<k$ such that \eqref{equation:case1} holds.

Now we turn to the second case, when \eqref{equation:case1} does not hold, that
is, there is a point $x_0\in\F_p^d$ such that
\[
\eta_{l}(x_0)\ge\frac{40}{41}\|\eta_l\|_{\Ltwo}.
\]
In this case $\eta_l$ is very close to a constant multiple of $\d_{x_0}$ in the $\Ltwo$ norm
so we can estimate $\|\eta_{l+1}\|_{\Ltwo}$ using the assumption $\|\mu.\d_{x_0}\|_\Ltwo\le3/4$.

More precisely, we can write
\bean
\|\eta_{l+1}\|_{\Ltwo}
&=&\|\mu.\eta_l\|_{\Ltwo}\\
&\le&\frac34\eta_l(x_0)+\sqrt{\|\eta_l\|^2_\Ltwo-\eta_l^2(x_0)}\\
&\le&\left(\frac34+\frac{9}{41}\right)\|\eta_l\|_\Ltwo
<e^{-2^{-6}}\|\eta_l\|_\Ltwo.
\eean

Since $\eta_k$ is a probability measure for all integers $k\ge0$, we have $\|\eta_k\|_\Ltwo\ge p^{-d/2}$.
Therefore it follows that the number of
nonnegative integers $l<k$
such that \eqref{equation:case1}
fails is at most
\[
2^5d\log p.
\]

If we combine this with the estimate for the number of steps when \eqref{equation:case1}
holds, we can conclude that for
\[
k=\lfloor 2^{15}d\log p\rfloor
\]
we have $\|\wh\eta_k\|_\Lhfour\le 4p^{-d/4}$, hence $\|\eta_k\|_{L^2}=\|\wh\eta_k\|_\Lhtwo\le 4p^{-d/4}$.

\section{The Bourgain-Gamburd method}
\label{section:BourgainGamburd}

We use the method of
Bourgain and Gamburd to prove Theorem \ref{theorem:technical}.
This material is fairly standard now, and most ideas have already appeared in
earlier works.
This method proves that a random walk has spectral gap if two
conditions hold.
First, the group $G$ should have no low dimensional representations.
Second, if there is a subset $A$ of $G$ of size approximately $|G|^\b$
that does not grow under multiplication, then the probability that the random walk
hits this set after approximately $l=\log|G|$ steps should be very small, e.g.
$|G|^{\e}$.

The method uses the notion of product sets, which we define now.
Let $A\subset G$ be a set; its $l$-fold product set is the set
\[
\Pi_lA=\{a_1\cdots a_l:a_1,\ldots a_l\in A\}.
\]

The method can be summarized in the following theorem.

\begin{AbcTheorem}[Bourgain, Gamburd]\label{theorem:BourgainGamburd}
There is an absolute constant $C$, and for any $\e>0$ there is a $\d>0$
such that the following holds.
Let $G$ be a finite group and $\pi$ an irreducible unitary representation of it.
Let $\mu$ be a symmetric probability measure on $G$.
Let $l_1>0$ be an integer and 
suppose that for any symmetric set $A\subset G$ that satisfies
\be\label{equation:largeA}
\mu^{*(l)}(A)\ge |G|^{-\e}
\ee
for some integer $l\ge l_1$, we either have
\be\label{equation:growth}
|\Pi_3A|\ge  |G|^\e\cdot |A| \quad{\rm or}\quad |A|\ge(\dim\pi)^{-1/3}|G|.
\ee
Then
\[
\|\pi(\mu)\|<(C\dim \pi)^{-\d/l_1}.
\]
\end{AbcTheorem}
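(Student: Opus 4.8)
The plan is to run the Bourgain--Gamburd machine: pass from the operator norm $\|\pi(\mu)\|$ to the $\Ltwo$-norms of the convolution powers $\mu^{*(n)}$, show these decay geometrically under the assumed dichotomy, and then convert back. For the first step (passing to $\ell^2$-norms): since $\mu$ is symmetric and $\pi$ unitary, $\pi(\mu)$ is self-adjoint, so $\Tr\big(\pi(\mu)^{2n}\big)\ge\|\pi(\mu)\|^{2n}$; decomposing the left regular representation into isotypic components and using $\|\mu^{*(n)}\|_\Ltwo^2=\mu^{*(2n)}(1)=\frac1{|G|}\sum_{\rho}(\dim\rho)\Tr\big(\rho(\mu)^{2n}\big)$ yields
\[
\|\mu^{*(n)}\|_\Ltwo^2\ \ge\ \frac{\dim\pi}{|G|}\,\|\pi(\mu)\|^{2n}.
\]
Hence it suffices to find $n\le Tl_1$, with $T$ depending only on $\e$, such that $\|\mu^{*(n)}\|_\Ltwo^2\le|G|^{O(\e)}(\dim\pi)^{1/3}|G|^{-1}$; then $\|\pi(\mu)\|^{2n}\le|G|^{O(\e)}(\dim\pi)^{-2/3}$, and taking $2n$-th roots and choosing $\d=\d(\e)$ small enough gives the asserted bound (the $|G|^{O(\e)}$ loss being harmless in the range where the statement has content, namely $\dim\pi$ at least a positive power of $|G|$).

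The heart of the matter is the second step, an $\ell^2$-flattening lemma: I claim there is $\e'=\e'(\e)>0$ so that for any $n\ge l_1$, if $\|\mu^{*(2n)}\|_\Ltwo\ge|G|^{-\e'}\|\mu^{*(n)}\|_\Ltwo$ then already $\|\mu^{*(n)}\|_\Ltwo^2\le|G|^{O(\e')}(\dim\pi)^{1/3}|G|^{-1}$. To prove it, set $\nu=\mu^{*(n)}$ and split $\nu$ into dyadic level sets $A_j=\{x:2^{-j-1}<\nu(x)\le2^{-j}\}$. Pigeonholing over the $O(\log|G|)$ relevant levels, the hypothesis $\|\nu*\nu\|_\Ltwo\ge|G|^{-\e'}\|\nu\|_\Ltwo$ forces a single level set $A$ with $|A|\le|G|^{O(\e')}\|\nu\|_\Ltwo^{-2}$ to carry large multiplicative energy $E(A)\ge|G|^{-O(\e')}|A|^3$. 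The non-commutative Balog--Szemer\'edi--Gowers theorem, followed by Ruzsa/Pl\"unnecke-type covering estimates, then produces a symmetric $A'$ with $|A'|\ge|G|^{-O(\e')}|A|$, with $|\Pi_3A'|\le|G|^{O(\e')}|A'|$, and with $\mu^{*(Cn)}(A')\ge|G|^{-O(\e')}$ for an absolute $C\ge1$. Choosing $\e'$ small enough that all these $|G|^{O(\e')}$ factors lie below $|G|^{\e}$, and noting $Cn\ge l_1$, the growth dichotomy applies to $A'$ with $l=Cn$: the alternative $|\Pi_3A'|\ge|G|^{\e}|A'|$ is excluded, so $|A'|\ge(\dim\pi)^{-1/3}|G|$, and comparison with $|A'|\le|G|^{O(\e')}\|\nu\|_\Ltwo^{-2}$ gives the claimed bound.

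For the iteration step, start from $\|\mu^{*(l_1)}\|_\Ltwo\le1$ and apply the flattening lemma along $n=l_1,2l_1,4l_1,\dots$. At each doubling either its conclusion already holds --- and then it persists for all larger $n$, since $n\mapsto\|\mu^{*(n)}\|_\Ltwo$ is non-increasing --- or the $\Ltwo$-norm drops by a factor $|G|^{-\e'}$. After $J=\lceil 1/(2\e')\rceil$ doublings the norm would, if it never stopped dropping, fall below $|G|^{-1/2}$, which by itself forces the desired bound; so with $T=2^{J}$, depending only on $\e$, we obtain $\|\mu^{*(Tl_1)}\|_\Ltwo^2\le|G|^{O(\e)}(\dim\pi)^{1/3}|G|^{-1}$, and feeding this into the displayed inequality completes the argument.

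The main obstacle is entirely in the flattening lemma. One must run Balog--Szemer\'edi--Gowers and the product-set inequalities in a non-abelian group while keeping every parameter loss of the form $|G|^{O(\e')}$, so that the iteration stops after a number of doublings bounded independently of $|G|$; one must extract from the level-set structure a set $A'$ that is simultaneously a bounded-tripling set and carries mass $\ge|G|^{-\e}$ under some $\mu^{*(l)}$ with $l\ge l_1$, so that the hypothesis can be triggered; and one must dispose of the possibility that $\nu$ has a large atom --- which is exactly the scenario the ``$|A'|\ge(\dim\pi)^{-1/3}|G|$'' escape clause is designed to absorb, so no non-concentration input beyond what the theorem assumes is needed. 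Converting $\|\nu*\nu\|_\Ltwo\ge|G|^{-\e'}\|\nu\|_\Ltwo$ into a genuine energy lower bound for a single dyadic level is the technically delicate link.
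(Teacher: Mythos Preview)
Your proposal is correct and follows essentially the same route as the paper: iterated $\Ltwo$-flattening along the doubling sequence $l_1,2l_1,4l_1,\dots$, combined with the Sarnak--Xue high-multiplicity trick to convert the $\Ltwo$-bound on $\mu^{*(n)}$ into an operator-norm bound on $\pi(\mu)$. The only difference is cosmetic: the paper invokes the flattening lemma as a black box (Lemma~\ref{lemma:flattening}, quoted from \cite{Var-squarefree}) rather than sketching its proof via dyadic level sets and Balog--Szemer\'edi--Gowers, and it places the trace argument at the end rather than the beginning.
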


Note that if $\e$ is too large or $\dim\pi$ too small there may be no probability measure
$\mu$ satisfying the conditions of the theorem.
Indeed, if $\lfloor (\dim\pi)^{-1/3}|G|\rfloor > |G|^{1-\e}$ then for any probability
measure $\mu$ we can find a set $A$ with
\[
(\dim\pi)^{-1/3}|G| > |A| > |G|^{1-\e}
\]
so that $\mu(A) > |G|^{-\e}$, violating the conditions of the theorem since clearly $|\Pi_3A|\le  |G|$.

This theorem is implicitly contained in the paper \cite{BG-prime}, and
variants of its proof appeared in many papers.
In particular, \cite[Corollary 4.4]{Kow-Sl2} contains a version with explicit constants,
but unfortunately, as it is stated that version only applies to groups without large normal subgroups.
For completeness, we include the proof of Theorem \ref{theorem:BourgainGamburd}
in Section \ref{section:BGproof}.

We comment on the role of the triple product set $\Pi_3A$ in the theorem.
The following Lemma shows that if the set $\Pi_k A$ is much larger than $A$, then so is $\Pi_3A$.
Hence an equivalent theorem could be stated with $\Pi_kA$ instead of $\Pi_3A$ for any integer $k\ge3$.
This observation will be important for us, since in our proof of \eqref{equation:growth},
we will estimate the size of $\Pi_{29}A$.
Note, however, that for nonabelain groups it may well happen that $\Pi_2A$ is of comparable size to $A$ but $\Pi_3A$ is much bigger.
The lemma below (in a less explicit form) is due to Tao \cite[Lemma 3.4]{Tao-noncommutative}; in this form, which is a simple corollary of the Ruzsa Triangle Inequality (cf.\ \cite[Lemma 3.2]{Tao-noncommutative} and the references given there), it can be found e.g.\ in \cite{LV-book}.

\begin{AbcLemma}\label{lemma:tripleproduct}
Let $A\subset G$ be a symmetric subset of a group.
Then for any integer $k\ge 3$, we have
\[
\frac{|\Pi_k A|}{|A|}\le\left(\frac{|\Pi_3 A|}{|A|}\right)^{k-2}.
\]
\end{AbcLemma}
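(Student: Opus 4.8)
The plan is to reduce the statement to the Ruzsa triangle inequality and then run a short induction on $k$. The form of the triangle inequality I would use is: for finite nonempty subsets $U,V,W$ of a group,
\[
|U\cdot W|\cdot|V|\le|U\cdot V|\cdot|V^{-1}\cdot W|.
\]
This is proved in one line by fixing, for each $x\in U\cdot W$, a decomposition $x=u_xw_x$ with $u_x\in U$ and $w_x\in W$, and checking that the map $(x,v)\mapsto(u_xv,\;v^{-1}w_x)$ is an injection of $(U\cdot W)\times V$ into $(U\cdot V)\times(V^{-1}\cdot W)$: the product of the two output coordinates equals $u_xw_x=x$, which recovers $x$, hence $u_x$ and $w_x$, hence $v=u_x^{-1}(u_xv)$.

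Set $K=|\Pi_3A|/|A|$; the claim to prove is $|\Pi_kA|\le K^{k-2}|A|$ for every integer $k\ge3$. The base case $k=3$ is just the definition of $K$. For the inductive step I would assume the bound for $k-1$ (with $k\ge4$) and apply the triangle inequality with $U=\Pi_{k-2}A$, $W=\Pi_2A$, and $V=A$. Then $U\cdot W=\Pi_kA$ and $U\cdot V=\Pi_{k-1}A$, while, using that $A$ is symmetric so that $V^{-1}=A$, we get $V^{-1}\cdot W=A\cdot\Pi_2A=\Pi_3A$. Hence $|\Pi_kA|\cdot|A|\le|\Pi_{k-1}A|\cdot|\Pi_3A|$, and together with the inductive hypothesis this gives
\[
|\Pi_kA|\le\frac{|\Pi_{k-1}A|}{|A|}\cdot\frac{|\Pi_3A|}{|A|}\cdot|A|\le K^{k-3}\cdot K\cdot|A|=K^{k-2}|A|,
\]
which closes the induction and yields the lemma.

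There is no real obstacle here; the argument is entirely formal once the right instance of the Ruzsa triangle inequality is selected. The only points deserving attention are the choice $U=\Pi_{k-2}A$, $W=\Pi_2A$, $V=A$ (as opposed to, say, $U=\Pi_{k-1}A$, $V=\Pi_2A$, which would produce the uncontrolled set $\Pi_{k+1}A$ on the right), and the observation that symmetry of $A$ is used precisely in the identification $V^{-1}\cdot W=\Pi_3A$ — without it one would only obtain $A^{-1}\cdot A\cdot A$ on the right-hand side, which need not equal $\Pi_3A$.
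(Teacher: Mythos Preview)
Your argument is correct. The paper does not actually spell out a proof of this lemma; it merely notes that the statement is a simple corollary of the Ruzsa triangle inequality and cites Tao \cite{Tao-noncommutative} and \cite{LV-book}. Your proof is precisely the standard derivation along those lines: you state the triangle inequality $|U\cdot W|\,|V|\le|U\cdot V|\,|V^{-1}\cdot W|$ and then induct with the choice $U=\Pi_{k-2}A$, $V=A$, $W=\Pi_2A$, using symmetry of $A$ to identify $V^{-1}\cdot W$ with $\Pi_3A$. There is nothing to add or correct.
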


The following lemma gives a lower bound on the dimension of non-trivial representations
of $\ag$.

\begin{AbcLemma}[Landazuri, Seitz]\label{lemma:dimension}
If $\pi$ is a nontrivial representation of $\ag$, then
\[
\dim\pi\ge 
\begin{cases}
\frac12(p-1)& \text{if $d=2$}\\
p^{d-1}-1&\text{otherwise.}
\end{cases}
\]
\end{AbcLemma}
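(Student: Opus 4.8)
The plan is to pass to an irreducible constituent of $\pi$ and then restrict to the translation subgroup $N=\F_p^d\trianglelefteq\ag$, using Clifford theory to reduce the whole question to facts about $\SL_d(\F_p)$ and its linear action on $\F_p^d$.

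First I would note that a nontrivial representation always contains a nontrivial irreducible subrepresentation (complex representations of a finite group being semisimple), and that the dimension can only drop on passing to a subrepresentation; so it is enough to treat the case when $\pi$ is irreducible and nontrivial. Restricting such a $\pi$ to the abelian normal subgroup $N$, Clifford's theorem shows that $\pi|_N$ is the direct sum of the $N$-isotypic subspaces belonging to the characters in one $\SL_d(\F_p)$-orbit $\mathcal O\subseteq\wh N$ (here $N$ acts trivially on $\wh N$, so the orbit under $\ag$ is the same as that under $\SL_d(\F_p)$), and that all of these isotypic pieces share a common dimension $m\ge 1$; hence $\dim\pi=m\,|\mathcal O|\ge|\mathcal O|$.

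Then I would split into two cases. If $\mathcal O=\{0\}$, then $N\subseteq\ker\pi$, so $\pi$ is inflated from a nontrivial representation of $\ag/N\cong\SL_d(\F_p)$, and the bound reduces to the corresponding statement for $\SL_d(\F_p)$ itself: the value $\tfrac12(p-1)$ for $d=2$ (which one reads off the character table of $\SL_2(\F_p)$) and the classical Landazuri--Seitz estimate for $d\ge3$. If instead $\mathcal O$ contains a nonzero character, then, since $\SL_d(\F_p)$ acts transitively on $\F_p^d\setminus\{0\}$ for $d\ge2$, it also acts transitively on $\wh N\setminus\{0\}$, so $|\mathcal O|=p^d-1$ and $\dim\pi\ge p^d-1$. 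Taking the smaller of the two bounds over the two cases then gives the lemma.

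The main point — essentially the only non-bookkeeping step — is the second case: once one observes that $\SL_d(\F_p)$ is transitive on the nonzero characters of $\F_p^d$, the bound $p^d-1$ is immediate from Clifford's theorem. The first case is a pure black-box appeal to the representation theory of $\SL_d(\F_p)$. The one subtlety to keep in mind is the opening reduction: ``nontrivial representation'' has to be read as ``not a direct sum of copies of the trivial representation'', which is precisely what guarantees a nontrivial irreducible constituent to which the rest of the argument applies.
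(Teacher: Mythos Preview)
Your argument is correct and reaches the same black-box appeal to the Landazuri--Seitz bounds for $\SL_d(\F_p)$, but the route is genuinely different from the paper's. The paper does not use Clifford theory or the normal translation subgroup at all: it simply observes that $\ag$ is generated by subgroups isomorphic to $\SL_d(\F_p)$ (the point stabilizers), so a nontrivial $\pi$ must restrict nontrivially to one of them, and then the Landazuri--Seitz bound for $\SL_d(\F_p)$ applies directly to that restriction. That is the entire argument --- two sentences.

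Your approach instead restricts to the abelian normal subgroup $N=\F_p^d$ and splits according to whether $N$ acts trivially. The payoff is that in the case where $N$ acts nontrivially you get the bound $p^d-1$ for free, straight from the size of the $\SL_d(\F_p)$-orbit on $\wh N\setminus\{0\}$, with no appeal to Landazuri--Seitz at all; only the case where $\pi$ factors through $\SL_d(\F_p)$ needs the cited bound. The paper's proof is shorter but leans entirely on the citation in both cases, whereas yours explains structurally where the number $p^d-1$ comes from. Both proofs ultimately depend on the same Landazuri--Seitz estimate for $\SL_d(\F_p)$ to handle representations inflated from the linear quotient, so whatever strength that citation actually delivers is the bottleneck in either argument.
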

\begin{proof}
Since $\ag$ is generated by subgroups isomorphic to $\SL_d(\F_p)$, the restriction of $\pi$
to one of these must be non-trivial.
Then the bound claimed in the lemma is in  \cite[p. 419]{LS-minimal-degree}.
\end{proof}

In Section \ref{section:growth}, we show that if $\mu$ is a measure that satisfy
the conditions in Proposition \ref{proposition:decay}, then any set $A$ that satisfies
\eqref{equation:largeA}, also satisfies the growth condition \eqref{equation:growth}.

In Section \ref{section:proof}, we construct a measure $\mu_0$ which satisfies the conditions in Proposition \ref{proposition:decay}
using the measure $\mu$ from Theorem \ref{theorem:technical}.
We will relate the random walks generated by the measures $\mu$ and $\mu_0$ and conclude the
proof of Theorem \ref{theorem:technical}.

\subsection{Growth of product sets}
\label{section:growth}

The following result is not new, a version with different constants could be deduced from the more general
results  \cite[Theorem 7]{PS-product-theorem} or \cite[Proposition 27]{SGV-perfect}.
Since this special case is much simpler, we provide a quick proof for completeness.

\begin{prp}\label{proposition:growth}
Let $\mu$ be a symmetric probability measure on $G$ that satisfies
the conditions required in Proposition \ref{proposition:decay}.
Then there is an $\e>0$ depending only on $d$, such that the following holds.
Let  $A\subset \ag$ be a symmetric set
that satisfies
\be\label{equation:largeA2}
\mu^{*(l)}(A)\ge |\ag|^{-\e}
\ee
for some integer $l\ge l_1=\lfloor 2^{15}d^2\log p\rfloor$.
Then
\[
\Pi_{29}A=\ag.
\]
\end{prp}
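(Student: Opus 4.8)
The plan is to combine the non-concentration estimate of Proposition \ref{proposition:decay} with the growth machinery for $\SL_d(\F_p)$, lifting the conclusion from the linear part back to the affine group. First I would observe that, since $\mu^{*(l)}(A) \ge |\ag|^{-\e}$ and $|\ag| = p^d \cdot |\SL_d(\F_p)|$, the projection $\t(A) \subset \SL_d(\F_p)$ must satisfy $\mu_\t^{*(l)}(\t(A)) \ge |\SL_d(\F_p)|^{-\e'}$ for a suitable $\e'$ comparable to $\e$, where $\mu_\t$ is the pushforward of $\mu$ under $\t$. Because $\|\cL_0^\t(\mu)\| \le 1/2$ by hypothesis, the random walk $\mu_\t^{*(l)}$ is extremely close to uniform on $\SL_d(\F_p)$ for $l \ge l_1 \gg \log |\SL_d(\F_p)|$; in particular the $\SL_d(\F_p)$-component cannot concentrate, so $|\t(A)|$ is forced to be a positive power of $|\SL_d(\F_p)|$ — i.e.\ $\t(A)$ is a "large" subset. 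Then I would invoke the product theorem in $\SL_d(\F_p)$ (Helfgott / Breuillard--Green--Tao / Pyber--Szab\'o, as cited, or alternatively the Bourgain--Gamburd consequence for $S'$ in the hypotheses carried through Proposition \ref{proposition:decay}): a large subset of $\SL_d(\F_p)$ that does not generate a proper subgroup has $\Pi_k \t(A) = \SL_d(\F_p)$ for some absolute $k$ (bounded in terms of $d$), after shrinking to a large generating piece.

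Next I would handle the $\F_p^d$-direction. The key point is that $\Pi_k A$ surjecting onto $\SL_d(\F_p)$ is not enough; one also needs the translation part to fill $\F_p^d$. Here the commutator structure of $\ag$ helps: if $g \in A$ has linear part $\s = \t(g)$ and $h \in A$ has linear part $\tau$, then the translation part of $g h g^{-1} h^{-1}$ is an explicit $\F_p$-linear combination of $v(g), v(h)$ determined by $\s, \tau$ via the formula \eqref{equation:inverse} and the product law. Since $\Pi_k A$ already contains a large chunk of linear parts, I can produce many group elements in $\Pi_{O(k)} A$ whose linear parts are, say, a fixed nontrivial $\s_0$ but whose translation parts span an $\F_p[\s_0]$-submodule of $\F_p^d$; iterating over enough linear parts (using irreducibility of the standard representation of $\SL_d(\F_p)$ on $\F_p^d$, which has no proper invariant subspace) forces the set of attainable translation vectors to be all of $\F_p^d$. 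Concretely, the non-concentration estimate $\eta_l(u_0) = \mu^{*(l)}(A_{v_0,u_0}) \le 5p^{-d/4}$ from Proposition \ref{proposition:decay} guarantees the random walk is not trapped in any single affine coset of an $\SL_d(\F_p)$-copy, so for $l \ge l_1$ the set $A$ genuinely "sees" translations in many directions; combined with the $\SL_d(\F_p)$-generation this pins down the whole translation lattice within a bounded number of further products.

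The main obstacle I anticipate is the careful bookkeeping of product-set lengths: one must verify that all the maneuvers above — passing from $A$ to a large generating subset, running the $\SL_d(\F_p)$ product theorem, taking commutators to extract translations, and spanning $\F_p^d$ — can all be carried out inside $\Pi_{29} A$ rather than $\Pi_N A$ for some unspecified large $N$. This requires tracking exact exponents: the product theorem in $\SL_d(\F_p)$ applied to a set of size $\ge |\SL_d(\F_p)|^{\delta}$ reaches the whole group in a number of steps depending only on $d$ and $\delta$, so $\e$ must be chosen small enough (depending only on $d$) that this number, plus the $O(1)$ extra products needed for the commutator/spanning argument, totals at most $29$. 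I would organize the proof so that the bulk of the $29$ is allocated to the $\SL_d$-side (where the product theorem is a black box with a $d$-dependent constant) and only a small bounded number of additional products — genuinely an absolute constant like $3$ or $4$ — is spent on the translation-spanning step, which is elementary linear algebra. Lemma \ref{lemma:tripleproduct} then reconciles the bound $\Pi_{29}A = \ag$ with the $\Pi_3 A$ formulation needed for Theorem \ref{theorem:BourgainGamburd}.
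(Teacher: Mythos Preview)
Your proposal has the right architecture—handle the linear part first, then manufacture translations—but both steps miss the concrete mechanisms, and one of your tools does not do what you claim.

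For the $\SL_d$-step, invoking the product theorem is misplaced. From $\|\cL_0^\t(\mu)\|\le\tfrac12$ and $l\ge l_1$ one gets $\|\t(\mu^{*(l)})\|_2\le 2|\SL_d(\F_p)|^{-1/2}$, so Cauchy--Schwarz with \eqref{equation:largeA2} gives $|\t(A)|\ge|\SL_d(\F_p)|/(4|\ag|^{2\e})$—not merely ``a positive power'' but essentially full density. The product theorem governs the small regime and gives no uniform bound on the number of triplings once you are already this dense; what finishes in one stroke is the Gowers--Nikolov--Pyber quasirandomness bound, which yields $\t(\Pi_3 A)=\SL_d(\F_p)$ as soon as $|\t(A)|\ge|\SL_d(\F_p)|/D^{1/3}$ with $D$ the minimal nontrivial representation dimension. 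That is how the paper gets the linear part in exactly three products, independent of $d$, and is why the count $29$ comes out.

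The translation step has a genuine gap. The commutator $ghg^{-1}h^{-1}$ has linear part $\t(g)\t(h)\t(g)^{-1}\t(h)^{-1}$, which is \emph{not} trivial in general, so commutators do not produce pure translations; your ``$\F_p[\s_0]$-submodule'' spanning picture therefore has no starting point. What you need is a single element $g_0\in\Pi_m A$ with $\t(g_0)=1$ and $v(g_0)\neq0$, and you have not supplied a mechanism to extract one from Proposition~\ref{proposition:decay}. The paper does this by an averaging trick: pick a section $F:\SL_d(\F_p)\to\Pi_3 A$ of $\t$, set $v_0=|\SL_d(\F_p)|^{-1}\sum_\s v(F(\s))$, use Proposition~\ref{proposition:decay} to find $g_1\in A$ with $g_1.v_0\neq v_0$, and observe that every $F_2(\s)=F(\t(g_1)\s)^{-1}g_1F(\s)\in\Pi_7 A$ has trivial linear part while $|\SL_d(\F_p)|^{-1}\sum_\s \t(g_1)\s.v(F_2(\s))=g_1.v_0-v_0\neq0$, so some $v(F_2(\s))$ is nonzero. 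Once one nonzero pure translation sits in $\Pi_7 A$, conjugating by $\Pi_3 A$ (which hits every linear part) gives every nonzero pure translation in $\Pi_{13}A$, hence all of $\F_p^d$ in $\Pi_{26}A$, and $\Pi_{29}A=\ag$. No irreducibility or module argument is needed beyond transitivity of $\SL_d(\F_p)$ on $\F_p^d\setminus\{0\}$. (The paper also notes the alternative pigeonhole route from \cite{SGV-perfect}: once $\t(\Pi_3 A)=\SL_d(\F_p)$ and $A$ generates, $|\Pi_4 A|>|\Pi_3 A|\ge|\SL_d(\F_p)|$ forces two elements of $\Pi_4 A$ with equal linear part and distinct translation part—closer in spirit to what you were reaching for, but still not your commutator idea.)
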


In what follows, we assume that $\mu$ satisfies the conditions of Proposition \ref{proposition:decay}
and $A\subset\ag$ is a set that satisfies the conditions of Proposition \ref{proposition:growth}.

We first show that $\Pi_3A$ projects onto $\SL_d(\F_p)$.
To this end, we exploit the assumption of the spectral gap in the quotient.
Then we show that there is a pure translation in $\Pi_7A$.
We conjugate this with elements of $\Pi_3A$, to get all pure translations in $\Pi_{26}A$.
Finally we multiply this with $\Pi_3A$ to recover the whole group.

The same strategy was employed in \cite{SGV-perfect}, but our proof differs in
the way we produce the first pure translation (proof of Lemma \ref{lemma:translation} below).
In \cite{SGV-perfect} the inequality $|\Pi_4A|>|\Pi_3A|$ was exploited 
(this inequality holds if one knows as is the case in \cite{SGV-perfect}
that $A$ is generating, unless of course if $\Pi_3A$ is already everything),
which implies that $\Pi_4A$ must contain two elements with the same linear part.
In the present paper, we give a different proof based on an averaging argument, which works
well in the continuous setting of \cite{LV-Euclidean} as well.

\begin{lem}\label{lemma:linearpart}
We have $\t(\Pi_3A)=\SL_d(\F_p)$.
\end{lem}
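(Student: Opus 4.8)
The plan is to pass to the quotient $\SL_d(\F_p)$, where by hypothesis the random walk has a spectral gap; this forces $B:=\t(A)$ to be a positive‑density subset of $\SL_d(\F_p)$, and then a quasirandomness argument yields $\Pi_3 B=\SL_d(\F_p)$. Since $A$ is symmetric so is $B$, and $\t(\Pi_3 A)=\Pi_3 B$, so it is enough to prove $\Pi_3 B=\SL_d(\F_p)$.

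First I would record the equidistribution of the projected walk. Put $\bar\mu=\t_*\mu$, a probability measure on $\SL_d(\F_p)$; applying the identity $\cL(\nu)^l\d_1=\nu^{*(l)}$ inside $\SL_d(\F_p)$ gives $\bar\mu^{*(l)}=\cL^\t(\mu)^l\d_1$, and writing $\d_1=u+f$ with $u=|\SL_d(\F_p)|^{-1}\mathbf{1}$ and $f$ orthogonal to the constants, the hypothesis $\|\cL_0^\t(\mu)\|\le 1/2$ (part of the standing assumptions from Proposition \ref{proposition:decay}) yields, exactly as in the introduction,
\[
\bigl\|\bar\mu^{*(l)}-u\bigr\|_\Ltwo=\bigl\|\cL_0^\t(\mu)^l f\bigr\|_\Ltwo\le\|\cL_0^\t(\mu)\|^l\le 2^{-l}\le 2^{-l_1}
\]
for every $l\ge l_1=\lfloor 2^{47}d^2\log p\rfloor$. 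Since $l_1$ is enormous relative to $\log|\SL_d(\F_p)|\le d^2\log p$, the right‑hand side is far below $|\SL_d(\F_p)|^{-1}$.

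Next I would transfer the mass bound \eqref{equation:largeA2}. Because $\t$ is a homomorphism, $\bar\mu^{*(l)}=\t_*(\mu^{*(l)})$, and $A\subseteq\t^{-1}(B)$ gives $\bar\mu^{*(l)}(B)\ge\mu^{*(l)}(A)\ge|\ag|^{-\e}$. Comparing with $u$ via Cauchy--Schwarz,
\[
\frac{|B|}{|\SL_d(\F_p)|}=u(B)\ge\bar\mu^{*(l)}(B)-|\SL_d(\F_p)|^{1/2}\bigl\|\bar\mu^{*(l)}-u\bigr\|_\Ltwo\ge|\ag|^{-\e}-\tfrac12|\ag|^{-\e}=\tfrac12|\ag|^{-\e},
\]
the error term being negligible by the previous step. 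Since $|\ag|\le p^{d^2+d}$, this says that $B$ has density at least $\tfrac12 p^{-(d^2+d)\e}$ in $\SL_d(\F_p)$.

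Finally I would invoke quasirandomness. Let $D$ be the least dimension of a nontrivial irreducible representation of $\SL_d(\F_p)$; restricting such a representation to a block subgroup isomorphic to $\SL_2(\F_p)$, it stays nontrivial (the conjugates of such a block generate $\SL_d(\F_p)$), so the Landazuri--Seitz bound for $\SL_2(\F_p)$ (used in the proof of Lemma \ref{lemma:dimension}) gives $D\ge(p-1)/2$. By Gowers' mixing lemma for quasirandom groups, any $B\subseteq\SL_d(\F_p)$ with $|B|^3>|\SL_d(\F_p)|^3/D$ satisfies $\Pi_3 B=\SL_d(\F_p)$; by the density bound this holds as soon as $\bigl(\tfrac12 p^{-(d^2+d)\e}\bigr)^3>\tfrac{2}{p-1}$, which is true for all sufficiently large $p$ provided the constant $\e$ of Proposition \ref{proposition:growth} (which we are free to choose depending only on $d$) satisfies $3(d^2+d)\e<1$; the finitely many remaining primes can be absorbed into the constant $c$ of Theorem \ref{theorem:technical}. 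This gives $\Pi_3 B=\SL_d(\F_p)$, i.e.\ $\t(\Pi_3 A)=\SL_d(\F_p)$. The one delicate point is this last step — making sure the density of $\t(A)$ clears the quasirandomness threshold $D^{-1/3}$ — which is what pins down how small $\e$ must be; the crude bound $D\ge(p-1)/2$ already suffices (for $d\ge3$ one has much more, but it is not needed).
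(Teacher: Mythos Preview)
Your proof is correct and follows essentially the same route as the paper: use the spectral gap for $\cL_0^\t(\mu)$ to show that $\t_*\mu^{*(l)}$ is nearly uniform on $\SL_d(\F_p)$, apply Cauchy--Schwarz together with the mass bound \eqref{equation:largeA2} to deduce that $B=\t(A)$ has density at least of order $|\ag|^{-O(\e)}$, and then invoke the Gowers/Nikolov--Pyber quasirandomness bound to conclude $\Pi_3 B=\SL_d(\F_p)$. The only cosmetic differences are that you subtract off the uniform measure before applying Cauchy--Schwarz (yielding a slightly sharper density $\gtrsim|\ag|^{-\e}$ versus the paper's $\gtrsim|\ag|^{-2\e}$) and that you cite the cruder bound $D\ge(p-1)/2$ rather than the paper's $(p^d-1)/2$; either suffices for choosing $\e$ depending only on $d$.
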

\begin{proof}
We will show that
\[
|\t(A)|\ge\frac{|\SL_d(\F_p)|}{D^{1/3}},
\]
where $D$ is the minimal dimension of a non-trivial representation of $\SL_d(\F_p)$.
Then the claim $\Pi_3\t(A)=\SL_d(\F_p)$ follows from a theorem of  Nikolov and Pyber
\cite[Corollary 1]{NP-product} (based on a paper of 
Gowers \cite{Gow-quasirandom}).

We begin by noting the identity
\[
\t(\mu^{*(l)})=\cL^\t(\mu)^l\d_{1}.
\]
By the assumption in Proposition \ref{proposition:decay},
we have $\|\cL_0^\t(\mu)\|\le1/2$.
We can write $\d_{1}=\f_1+\f_2$, such that
$\f_1\equiv 1/|\SL_d(\F_p)|$, and $\f_2$ is orthogonal to the constant.
Then
\[
\|\t(\mu^{*(l)})\|_2\le|\SL_d(\F_p)|^{-1/2}+\frac{1}{2^l}\le 2|\SL_d(\F_p)|^{-1/2},
\]
since $l\ge l_1\ge d^2\log p/\log 2$.

By the assumption in Proposition \ref{proposition:growth}, we have
\[
\sum_{g\in A}\mu^{*(l)}(g)\ge|\ag|^{-\e}.
\]
By the Cauchy-Schwartz inequality,
\bean
|\ag|^{-\e}&\le&\sum_{\s\in \t(A)}\t(\mu^{*(l)})(\s)\\
&\le&|\t(A)|^{1/2}\|\t(\mu^{*(l)})\|_2.
\eean
Combining with the inequality in the previous paragraph,
this implies
\[
|\t(A)|\ge \frac{|\SL_d(\F_p)|}{4|\ag|^{2\e}}.
\]

To finish, we note that any non-trivial representation of $\SL_d(\F_p)$
is of dimension $\ge (p^{d-1}-1)/2$  (see Lemma \ref{lemma:dimension}),
and $|\ag|\le p^{d^2+d}$.
Now the lemma follows from the remarks at the beginning of the proof,
if $\e$ is sufficiently small depending on $d$.
If $p$ is sufficiently large, any $\e\le1/(6d+6)$ works.
\end{proof}

\begin{lem}\label{lemma:translation}
There is a non-zero pure translation in $\Pi_7A$, that is, there is an element
$g_0\in \Pi_7A$, such that $\t(g_0)=1$ and $v(g_0)\neq 0$.
\end{lem}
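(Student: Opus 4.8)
The goal is to produce a nonzero pure translation inside a bounded product set of $A$. Since $\t(\Pi_3 A) = \SL_d(\F_p)$ by Lemma~\ref{lemma:linearpart}, the first step is to use an averaging argument to locate, in a small product set of $A$, two elements sharing the same linear part. My plan is to look at the random walk $\eta_l = \mu^{*(l)}.\d_{v_0}$ (for any base point $v_0$), and exploit Proposition~\ref{proposition:decay}: for $l$ roughly $2^{47} d \log p$ we have $\|\eta_l\|_{L^2}$ tiny, of size at most $5 p^{-d/4}$. The reason this helps: an element $g$ of $\ag$ with $\t(g)$ fixed is determined by where it sends $v_0$, so if the random walk on $\F_p^d$ is very spread out, a single linear part $\s \in \t(A')$ is attained (for an appropriate product set $A'$ of $A$) by elements $g$ whose translation parts are many and varied.

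More concretely, I would argue as follows. From $\mu^{*(l)}(A) \ge |\ag|^{-\e}$ together with the smallness of $\|\eta_l\|_{L^2}$, a Cauchy--Schwarz / pigeonhole argument shows that $A$ (or $\Pi_k A$ for some small fixed $k$ depending on how many times we need to iterate the random walk estimate) must contain two distinct elements $g_1, g_2$ with $\t(g_1) = \t(g_2)$ but $v(g_1) \ne v(g_2)$: indeed, if every element of $A$ with a given linear part had the same translation part, then $\mu^{*(l)}$ restricted to $A$ would push $\d_{v_0}$ onto a set of size $|\t(A)|$ with each fiber a single point, forcing $\|\eta_l\|_{L^2}$ to be at least of order $|\t(A)|^{-1/2}$ times the mass, which contradicts $\|\eta_l\|_{L^2} \le 5 p^{-d/4}$ once $|\t(A)| \le |\ag| \le p^{d^2+d}$ is taken into account — wait, this needs care, since $p^{-d/4}$ versus $p^{-(d^2+d)/2}$ goes the wrong way. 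So instead I would phrase it through the measure directly: $\|\eta_l\|_{L^2}^2 = \sum_u \eta_l(u)^2 \ge \sum_u (\mu^{*(l)}\text{-mass landing at }u)^2$, and if on the set $A$ the map $g \mapsto g.v_0$ were injective on each linear-part fiber of size $1$, the mass $|\ag|^{-\e}$ would be distributed across at most $|\t(A)| \le p^{d^2+d}$ atoms, giving $\|\eta_l\|_{L^2}^2 \ge |\ag|^{-2\e}/p^{d^2+d}$, which for $\e$ small (as in Lemma~\ref{lemma:linearpart}) still exceeds $25 p^{-d/2}$ — this does give a contradiction provided $\e$ is chosen small enough relative to $d$. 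Hence $A$ contains $g_1 \ne g_2$ with the same linear part $\s$.

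Given such $g_1, g_2 \in A$, the element $g_1 g_2^{-1}$ lies in $\Pi_2 A$ (using that $A$ is symmetric) and has trivial linear part $\s \s^{-1} = 1$, and its translation part is $v(g_1) - \t(g_1)\t(g_2)^{-1}.v(g_2) = v(g_1) - v(g_2) \ne 0$ by the formula \eqref{equation:inverse} for the inverse and the product law. So we already get a nonzero pure translation in $\Pi_2 A \subset \Pi_7 A$. The slack between $\Pi_2$ and $\Pi_7$ in the statement presumably absorbs the possibility that one needs a few iterations of the random-walk argument — if the single set $A$ is not spread out enough one applies the dichotomy of Proposition~\ref{proposition:decay}'s proof to a product set $\Pi_k A$ with $k \le 7$, or one first replaces $A$ by $\Pi_3 A$ to guarantee surjectivity of the linear part before running the pigeonhole — so I would be generous with the product length and simply check that $7$ suffices for whichever of these routes is cleanest.

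\textbf{Main obstacle.} The delicate point is the quantitative comparison: the $L^2$-smallness furnished by Proposition~\ref{proposition:decay} is $\|\eta_l\|_{L^2} \le 5 p^{-d/4}$, whereas the number of cosets / linear parts can be as large as $|\ag| \approx p^{d^2+d}$, so a naive pigeonhole over all of $\t(A)$ does not immediately beat the trivial bound. The fix is to use the hypothesis $\mu^{*(l)}(A) \ge |\ag|^{-\e}$ with $\e$ very small (of order $1/d$, as already fixed in Lemma~\ref{lemma:linearpart}) so that the mass is concentrated enough that non-injectivity of $g \mapsto g.v_0$ on $A$ is forced; getting the constants to line up — in particular verifying that $|\ag|^{-2\e}/p^{d^2+d}$ genuinely exceeds $(5p^{-d/4})^2$ for the chosen $\e$ and all large $p$ — is the one computation that actually has to be done carefully, and it may require taking $\e$ somewhat smaller than $1/(6d+6)$ or passing to a product set first to reduce the number of relevant cosets. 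Everything else is formal manipulation with the product law \eqref{equation:inverse} and symmetry of $A$.
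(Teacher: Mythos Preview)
Your pigeonhole argument has a genuine gap that cannot be repaired by shrinking $\e$. You want to deduce, from $\|\eta_l\|_\Ltwo\le 5p^{-d/4}$ and $\mu^{*(l)}(A)\ge|\ag|^{-\e}$, that $\t|_A$ is not injective. But the inequality you need is
\[
\frac{|\ag|^{-2\e}}{|\t(A)|}\;>\;25\,p^{-d/2},
\]
and since $|\t(A)|$ can be as large as $|\SL_d(\F_p)|\asymp p^{d^2-1}$, the left side is at best of order $p^{-(d^2-1)}$ while the right side is $p^{-d/2}$; for every $d\ge2$ the exponent on the left is strictly more negative, so the inequality fails for all large $p$ regardless of how small $\e$ is. (Your own bound $|\t(A)|\le p^{d^2+d}$ is even more generous and makes things worse.) The underlying reason is conceptual: $\eta_l$ lives on $\F_p^d$ and records only the images $g.v_0$; it carries no information about linear parts $\t(g)$, so smallness of $\|\eta_l\|_\Ltwo$ cannot by itself force two elements of $A$ to share a linear part. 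Your suggested fixes do not help either: passing to a product set enlarges $|\t(\cdot)|$, and the dichotomy inside the proof of Proposition~\ref{proposition:decay} concerns the random walk, not product sets of $A$.

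The paper's proof uses Proposition~\ref{proposition:decay} in a much weaker but correctly calibrated way, and then supplies the missing structure by an averaging identity rather than a pigeonhole. Concretely: using $\t(\Pi_3A)=\SL_d(\F_p)$ one picks a section $F:\SL_d(\F_p)\to\Pi_3A$ and sets $v_0$ equal to the \emph{average} of $v(F(\s))$ over all $\s$. The only use of Proposition~\ref{proposition:decay} is the $L^\infty$ bound $\mu^{*(l)}(G_{v_0})=\eta_l(v_0)\le5p^{-d/4}$, which for $\e$ small (indeed $\e\le 1/(6d+6)$ already suffices for large $p$) is $<|\ag|^{-\e}$, so $A\not\subset G_{v_0}$ and one finds $g_1\in A$ with $g_1.v_0\neq v_0$. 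One then forms $F_2(\s)=F(\t(g_1)\s)^{-1}g_1F(\s)\in\Pi_7A$, which always has $\t(F_2(\s))=1$; the point is the telescoping identity
\[
\frac{1}{|\SL_d(\F_p)|}\sum_{\s}\t(g_1)\s.\,v(F_2(\s))\;=\;g_1.v_0-v_0\;\neq\;0,
\]
which forces $v(F_2(\s))\neq0$ for some $\s$. The product length $7=3+1+3$ is exactly what this construction needs, not slack to absorb iterations. The paper remarks that the alternative route---finding two elements of a product set with equal linear part via $|\Pi_4A|>|\Pi_3A|$---is the one taken in \cite{SGV-perfect}, but that inequality requires knowing in advance that $A$ generates, and in any case it is not what your Cauchy--Schwarz estimate establishes.
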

\begin{proof}
By Lemma \ref{lemma:linearpart}, there is a map $F:\SL_d(\F_p)\to\Pi_3 A$
such that $\s=\t(F(\s))$ for all $\s\in\SL_d(\F_p)$.
We define
\[
v_0=\sum_{\s\in\SL_d(\F_p)} v(F(\s)).
\]

We show that $v_0$ is not a fixed point for all elements of $A$
under the natural action.
To this end, we write
\[
\eta_l=\mu^{*(l)}.\d_{v_0}.
\]
Since $l\ge 2^{15}d\log p$, we can apply
Proposition \ref{proposition:decay}, and we have
\[
\|\eta_l\|_{L^\infty}\le\|\eta_l\|_\Ltwo\le 4p^{-d/4}.
\]
Denoting by $G_{v_0}\subset\ag$  the stabilizer of the point $v_0\in\F_p^d$,
we have
\[
\mu^{*(l)}(G_{v_0})=\eta_l(v_0)\le4p^{-d/4}.
\]
If $\e$ is small enough, the assumption in Proposition \ref{proposition:growth}
implies that $A\not\subset G_{v_0}$.
That is, there is $g_1\in A$ such that $g_1.v_0\neq v_0$ as claimed.

We look at elements of the following form:
\[
F_2(\s)=F(\t(g_1)\s)^{-1}g_1F(\s)\in\Pi_7 A.
\]
By the definition of $F$, we have
\[
\t(F_2(\s))=(\t(g_1)\s)^{-1}\t(g_1)\s=1
\]
for all $\s\in\SL_d(\F_p)$.

On the other hand
\[
v(F_2(\s))=F_2(\s).0=-\s^{-1}\t(g_1)^{-1}.v(F(\t(g_1)\s))+\s^{-1}\t(g_1)^{-1}g_1.v(F(\s)).
\]
(To see this, recall formula \eqref{equation:inverse} for the inverse of an element of $\ag$.)
Then
\[
\t(g_1)\s.v(F_2(\s))=-v(F(\t(g_1)\s))+g_1.v(F(\s)).
\]

Since left multiplication by $\t(g_1)$ is a permutation on $\SL_d(\F_p)$, we get
\begin{align}\label{equation:average1}
\sum_{\s\in \SL_d(\F_p)}&\t(g_1)\s.v(F_2(\s))\\
&=\sum_{\s\in \SL_d(\F_p)}[-v(F(\t(g_1)\s))+g_1.v(F(\s))]\nonumber\\
&=-v_0+g_1.v_0.\label{equation:average2}
\end{align}
If $v(F_2(\s))$ were 0 for all $\s$, then \eqref{equation:average1}
would be 0.
On the other hand  \eqref{equation:average2} is clearly non-zero,
by the choice of $g_1$.
This proves that for some choice of $\s\in\SL_d(\F_p)$, $g_0=F_2(\s)$ satisfies the claims
of the lemma.
\end{proof}

\begin{proof}[Proof of Proposition \ref{proposition:growth}]
We consider the element $g_0\in\Pi_7 A$ found in Lemma \ref{lemma:translation} and
all elements of the form $gg_0g^{-1}\in\Pi_{13} A$ for $g\in\Pi_3 A$.
Since $\t(gg_0g^{-1})=\t(g)\t(g)^{-1}=1$, all of these are pure translations.
On the other hand, $v(gg_0g^{-1})=\t(g).v(g_0)$, and $\t(\Pi_3A)=\SL_d(\F_p)$,
hence $\Pi_{13}A$ contains all non-zero pure translations.
Then it follows that $\Pi_{26}A$ contains all pure translations.

Therefore, for a fixed $g\in\ag$, the set $\Pi_{26}A\cdot g$ contains all elements of
$\ag$ whose linear part is $\t(g)$.
Since $\t(\Pi_3A)=\SL_d(\F_p)$, $\Pi_{29}A=\ag$, as claimed.
\end{proof}

\subsection{Proof of Theorem \ref{theorem:technical}}
\label{section:proof}

We fix an integer
\[
l_0\ge\max\left\{\frac{3}{1-\a},\frac{\log 2}{2-2\|\cL_0^\t(\mu)\|}\right\}
\]
and set
\[
\mu_0=(\check\mu*\mu)^{*(l_0)},
\]
where $\check\mu$ is the measure on $\ag$ defined by
\[
\check\mu(g)=\mu(g^{-1}).
\]

The next lemma shows that the conditions of Propositions \ref{proposition:decay}
and \ref{proposition:growth} hold for $\mu_0$.
\begin{lem}\label{lemma:mu0}
With the notations above, the following holds:
\bean
\|\cL_0^\t(\mu_0)\|&\le&\frac{1}{2},\quad{\rm and}\\
\|\mu_0.\d_x\|_\Ltwo&\le&\frac34\quad{\rm for\: all\:} x\in\F_p^d.
\eean
\end{lem}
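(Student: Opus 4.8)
The plan is to verify the two displayed inequalities separately; each reduces to a short computation, and the only slightly delicate point is the $\Ltwo$-bound. For the spectral-gap inequality, note that $\cL^\t$ is a unitary representation, so $\cL^\t(\check\mu)=\cL^\t(\mu)^*$, and since $\mu_0=(\check\mu*\mu)^{l_0}$ this gives $\cL^\t(\mu_0)=(\cL^\t(\mu)^*\cL^\t(\mu))^{l_0}$; restricting to the invariant codimension-one subspace orthogonal to the constants yields $\cL_0^\t(\mu_0)=(\cL_0^\t(\mu)^*\cL_0^\t(\mu))^{l_0}$, which is positive self-adjoint of norm $\|\cL_0^\t(\mu)\|^{2l_0}$. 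Writing $\gamma=\|\cL_0^\t(\mu)\|$ --- we may assume $\gamma<1$ and $\a<1$, since otherwise no integer $l_0$ as in the statement exists and Theorem~\ref{theorem:technical} is vacuous --- the inequality $\log\gamma\le\gamma-1$ gives $\gamma^{2l_0}\le e^{-2l_0(1-\gamma)}$, which is at most $1/2$ by the choice $l_0\ge\frac{\log 2}{2-2\gamma}$.

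For the $\Ltwo$-bound, the key observation is that $(\check\mu*\mu).\d_z(y)\le\a$ for all $z,y\in\F_p^d$. Indeed, since the action respects convolution, $(\check\mu*\mu).\d_z=\check\mu.(\mu.\d_z)$, and $[\check\mu.\sigma](y)=\sum_h\mu(h)\sigma(h.y)$ is a $\mu$-weighted average of values of the probability measure $\mu.\d_z$, each of which is at most $\a$ by definition of $\a$. Now set $\rho_l=(\check\mu*\mu)^l.\d_x$ and $m_l=\|\rho_l\|_{L^\infty}$. Then $\rho_l$ is a probability measure on $\F_p^d$, $m_0=1$, the sequence $(m_l)$ is non-increasing (convolution does not increase the $L^\infty$-norm), $\mu_0.\d_x=\rho_{l_0}$, and it suffices to prove $m_{l_0}\le 9/16$, since then $\|\rho_{l_0}\|_\Ltwo^2\le\|\rho_{l_0}\|_{L^\infty}\le 9/16$.

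If $m_{l_0}\le 1/2$ we are done. Otherwise $m_l>1/2$ for every $l\le l_0$, and since $m_1=\|(\check\mu*\mu).\d_x\|_{L^\infty}\le\a$ this forces $\a>1/2$. For each $l<l_0$, the bound $m_l>1/2$ forces the maximum of $\rho_l$ to be attained at a unique point $z_l$, with $\rho_l(w)\le 1-m_l$ whenever $w\ne z_l$. Evaluating $\rho_{l+1}=(\check\mu*\mu).\rho_l$ at a point where it is maximal and splitting the defining sum according to whether or not $g^{-1}$ sends that point to $z_l$ --- the $(\check\mu*\mu)$-mass of the former set being at most $\a$ by the key observation --- gives $m_{l+1}\le\beta m_l+(1-\beta)(1-m_l)$ for some $\beta\le\a$, whence $m_{l+1}\le\a m_l+(1-\a)(1-m_l)$ (using $m_l>1/2$), i.e.\ $m_{l+1}-1/2\le(2\a-1)(m_l-1/2)$. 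Iterating from $m_0=1$ gives $m_{l_0}-1/2\le(2\a-1)^{l_0}/2$. Finally, $l_0\bigl(-\log(2\a-1)\bigr)\ge\tfrac{3}{1-\a}\cdot 2(1-\a)=6>\log 8$ (using $l_0\ge\tfrac{3}{1-\a}$ and $-\log t\ge 1-t$ for $t\in(0,1]$ with $t=2\a-1$), so $(2\a-1)^{l_0}<1/8$, giving $m_{l_0}< 9/16$ and hence $\|\mu_0.\d_x\|_\Ltwo\le 3/4$.

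The main obstacle is the averaging step of the third paragraph: one must notice that a probability measure with $L^\infty$-norm greater than $1/2$ has a single near-maximal atom, so that one additional $(\check\mu*\mu)$-convolution provably shrinks the distance from $m_l$ to the equilibrium value $1/2$ at a geometric rate governed by $\a$ (this is also the point where the hypothesis $\a<1$, hidden in the finiteness of $l_0$, is really used). Once this is in place the remaining steps are routine bookkeeping with the explicit constants, and the first inequality of the lemma is entirely standard.
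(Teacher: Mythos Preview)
Your proof is correct and follows essentially the same strategy as the paper's: the first inequality is handled identically (via $\|\cL_0^\t(\mu_0)\|=\|\cL_0^\t(\mu)\|^{2l_0}$), and for the second both arguments track the maximum of the iterated transition probabilities, exploit that a probability measure with $L^\infty$-norm exceeding $1/2$ has a single dominant atom with all other masses at most $1-m_l$, and derive a one-step recursion that forces the maximum below $9/16$ after $l_0$ steps.

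The one genuine difference is the recursion itself. The paper splits the sum $\a_{l+1}(x,y)=\sum_z\a_1(x,z)\a_l(z,y)$ by removing the single largest term from each factor, obtaining $\a_{l+1}\le\a\,\a_l+\tfrac14$ and then contracting toward the fixed point $c_0=\tfrac{1}{4(1-\a)}$. You instead bound the contribution from the non-dominant atoms of $\rho_l$ by $(1-\beta)(1-m_l)$ rather than by $(1-\a_l)$ times a mass, which yields the sharper linear contraction $m_{l+1}-\tfrac12\le(2\a-1)(m_l-\tfrac12)$ directly toward $\tfrac12$. Your version has two small advantages: the contraction factor $2\a-1$ is smaller than $\a$, and the fixed point $\tfrac12$ is automatically below $9/16$, so you avoid the delicate point in the paper's write-up where the fixed point $c_0$ need not actually be below $\tfrac12$ in the regime $\a>\tfrac12$ under consideration. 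Both arguments end identically by bounding $\|\mu_0.\d_x\|_\Ltwo^2\le\|\mu_0.\d_x\|_{L^\infty}\le 9/16$.
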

\begin{proof}
The first claim follows from
\[
\|\cL_0^\t(\mu_0)\|=\|\cL_0^\t(\check\mu*\mu)\|^{l_0}=\|\cL_0^\t(\mu)\|^{2l_0}
\le e^{(\|\cL_0^\t(\mu)\|-1)2l_0}
\]
and the assumption $l_0\ge\log2/(2-2\|\cL_0^\t(\mu)\|)$.

We turn to the proof of the second claim.
For $x,y\in\F_p^d$ and a positive integer $l$, we write
\[
\a_l(x,y)=(\check\mu*\mu)^{*(l)}.\d_{y}(x).
\]
This is the probability that the random walk on $\F_p^d$ generated by $\check\mu*\mu$ started from $y$
is at the point $x$ after $l$ steps.
It is easy to verify the identity
\[
\a_{l+1}(x,y)=\sum_{z\in\F_p^d}\a_1(x,z)\a_l(z,y).
\]

Write
\[
\a_l=\max_{x,y\in\F_p^d}\a_{l}(x,y),
\]
and observe that $\a_1\le\a$.
We claim that $\a_{l_0}\le9/16$.

To show this, write
\bean
\a_{l+1}(x,y)&=&\sum_{z\in\F_p^d}\a_1(x,z)\a_l(z,y)\\
&\le&\max_z\a_1(x,z)\cdot\max_z\a_l(z,y)\\
&&+(1-\max_z\a_1(x,z))\cdot(1-\max_z\a_l(z,y)).
\eean

In the domain $1/2\le s\le 1$, $1/2\le t \le 1$, the function $st-(1-s)(1-t)$ is monotone increasing
in both variables.
Thus
\be\label{equation:alplus1}
\a_{l+1}(x,y)\le\a_1\a_l+(1-\a_1)(1-\a_l)
\ee
provided
\be\label{equation:ge12}
\max_z\a_1(x,z)\ge\frac{1}{2}\quad{\rm and}\quad
\max_z\a_l(z,y)\ge\frac{1}{2}.
\ee

If \eqref{equation:ge12} fails, then
\[
\a_{l+1}(x,y)\le\min\{\max_z\a_1(x,z),\max_z\a_l(z,y)\}\le\frac12,
\]
so in either case we get
\[
\a_{l+1}\le\max\{\a_1\a_l+(1-\a_1)(1-\a_l),1/2\}.
\]

If $\a_{l}\le 1/2$ for some $l\le l_0$, then there is nothing to prove, so we assume this is not the case.
We can then write
\[
\Big(\a_{l+1}-\frac{1}{2}\Big)\le\a_1\Big(\a_l-\frac12\Big) -\frac12(1-\a_1)+(1-\a_1)(1-\a_l)
\le \a_1\Big(\a_l-\frac12\Big)
\]
for any $l< l_0$.
By iteration, and using $\a_1\leq \a$, we get
\[
\a_{l_0}\le \frac 12+ e^{(\a-1)l_0}.
\]
Since we took $l_0\ge 3/(1-\a)$, this implies $\a_{l_0}\le9/16$, as claimed.

To finish the proof of the second claim of the lemma,
we observe that
\[
\mu_0.\d_x(y)=\a_{l_0}(y,x)\le\a_{l_0}\le\frac{9}{16}.
\]
This implies
\[
\|\mu_0.\d_x\|_\Ltwo^2\le\|\mu_0.\d_x\|_{L^{\infty}}\cdot\|\mu_0.\d_x\|_{L^{1}}\le\frac{9}{16},
\]
which was to be proved.
\end{proof}

We are now in a position to finish the proof of Theorem \ref{theorem:technical}.
By Lemma \ref{lemma:mu0}, the conditions of Propositions \ref{proposition:decay}
and \ref{proposition:growth} are satisfied for $\mu_0$.
By these propositions and Lemma \ref{lemma:tripleproduct},
we can apply Theorem \ref{theorem:BourgainGamburd}
with $l_1=\lfloor 2^{15}d^2\log p\rfloor$ and some $\e>0$ small enough depending on $d$.
Thus we can conclude for all irreducible representations of $\ag$ that
\[
\|\pi(\mu_0)\|<(C\dim\pi)^{-\d/l_1}
\]
($\delta$ depending on $\e$, hence on $d$).
Then by Lemma \ref{lemma:dimension} we  have 
\[
\|\pi(\mu_0)\|<C_1^{1/\log p}e^{-2^{-15}\d/d}
\]
if $\pi$ is non-trivial
with $C_1$ depending only on $d$.
If $p$ is sufficiently large depending on the constants in the above inequality (hence only on $d$),
then we can write
\[
\|\pi(\mu_0)\|\le e^{-c_d},
\]
for some number $c_d>0$ depending only on $d$.
Since there are only finitely many not large enough primes, and the set of probability measures
satisfying the conclusions of Lemma \ref{lemma:mu0} is compact, the above inequality
holds for all $p$ for some number $c_d$.

Note that $\|\cL_0(\mu_0)\|$ is the maximum of $\|\pi(\mu_0)\|$ for $\pi$ running through
the non-trivial irreducible representations.
Thus
\[
\|\cL_0(\mu)\|=\|\cL_0(\mu_0)\|^{\frac{1}{2l_0}}\le e^{-\frac{c_d}{2l_0}},
\]
which was to be proved.

\subsection{Proof of Theorem \ref{theorem:BourgainGamburd}}
\label{section:BGproof}

We suppose that the assumptions of the theorem hold for some $G,\pi,\mu,\e,l_1$
and prove the conclusion for some $C,\d$.

The proof due to Bourgain and Gamburd consists of two parts.
First, we consider the $L^2$-norms $\|\mu^{*(l)}\|_2$ for $l\ge l_1$ and give improved bounds as $l$
increases.
Second, we exploit the fact that the eigenvalues of convolution operators on $L^2(G)$
have high multiplicities and hence we can get an estimate on them when $\|\mu^{*(l)}\|_2$
is close to the optimal bound, that is $|G|^{-1/2}$.
This second idea goes back to Sarnak and Xue \cite{SX-multiplicities}.

We recall the ``$L^2$-flattening Lemma'' of Bourgain and Gamburd.
This appeared implicitly in \cite{BG-prime}, and it is an application of the Balog-Szemer\'edi-Gowers
theorem combined with some results of Tao \cite{Tao-noncommutative}.
We use the version in \cite[Lemma 15]{Var-squarefree}.

\begin{AbcLemma}[Bourgain, Gamburd]\label{lemma:flattening}
Let $\nu_1$ and $\nu_2$ be two probability measures on a finite group $G$ and let $K>2$ be a number.
If
\[
\|\nu_1*\nu_2\|_2\ge\frac{\|\nu_1\|_2^{1/2}\|\nu_2\|_2^{1/2}}{K}
\]
then there is a symmetric set $S\subset G$ with
\begin{gather}
\frac{1}{CK^C\|\nu_1\|_2^2}\le  |S|\le \frac{CK^C}{\|\nu_1\|_2^2},\label{eq:Sbounds}\\
|\Pi_3 S|  \le CK^C|S|, \qquad\min_{g\in S}(\check\nu_1*\nu_1)(g) \ge\frac{1}{CK^C|S|},\nonumber
\end{gather}
where $C$ is an absolute constant.
\end{AbcLemma}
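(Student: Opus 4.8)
This is a standard consequence of the non-commutative Balog--Szemer\'edi--Gowers machinery --- the statement we use is \cite[Lemma 15]{Var-squarefree}, with roots in \cite{BG-prime} --- so the plan is to recall how that argument runs. Throughout, $C$ is an absolute constant whose value may grow from line to line, and (as in all our applications) we assume $\|\nu_1\|_2\ge\|\nu_2\|_2$.

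\textbf{Step 1: the autocorrelation is not flat.} First I would pass from $\nu_1$ to its autocorrelation $\rho=\check\nu_1*\nu_1$, a symmetric probability measure with $\|\rho\|_\infty=\rho(1)=\|\nu_1\|_2^2$ (the last equality by Cauchy--Schwarz). The elementary identity $\|\nu_1*\nu_2\|_2^2=\langle\nu_2,\rho*\nu_2\rangle$ turns the hypothesis into $\langle\nu_2,\rho*\nu_2\rangle\ge\|\nu_1\|_2\|\nu_2\|_2/K^2$. Split $\rho=\rho'+\rho''$, where $\rho'$ is the restriction of $\rho$ to the level set
\[
\Lambda=\{g\in G:\rho(g)\ge K^{-C}\|\nu_1\|_2^2\}.
\]
Then $\|\rho''\|_2\le(K^{-C}\|\nu_1\|_2^2)^{1/2}$, so $\langle\nu_2,\rho''*\nu_2\rangle\le K^{-C/2}\|\nu_1\|_2\|\nu_2\|_2$, which for $C$ large is at most half the right-hand side above; hence $\langle\nu_2,\rho'*\nu_2\rangle\ge\|\nu_1\|_2\|\nu_2\|_2/(2K^2)$, and since $\langle\nu_2,\rho'*\nu_2\rangle\le\rho(\Lambda)\|\nu_2\|_2^2$ we get $\rho(\Lambda)\ge\|\nu_1\|_2/(2K^2\|\nu_2\|_2)\ge K^{-C}$. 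Counting the mass of $\rho$ on $\Lambda$ from above ($\rho(\Lambda)\le1$ while $\rho\ge K^{-C}\|\nu_1\|_2^2$ there) and from below (each atom has mass $\le\|\nu_1\|_2^2$ while $\rho(\Lambda)\ge K^{-C}$) yields
\[
K^{-C}\|\nu_1\|_2^{-2}\le|\Lambda|\le K^{C}\|\nu_1\|_2^{-2}.
\]

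\textbf{Step 2: Balog--Szemer\'edi--Gowers.} That $\rho=\check\nu_1*\nu_1$ is large on $\Lambda$ says that the multiplicative energy between the dyadic level sets of $\nu_1$ and of $\nu_2$ is within a factor $K^{C}$ of its maximum. Feeding this into the non-commutative, asymmetric Balog--Szemer\'edi--Gowers theorem of Tao \cite{Tao-noncommutative} --- in the measure-theoretic formulation used in \cite{BG-prime, Var-squarefree}, which avoids any $\log|G|$ loss --- produces a symmetric set $S\subseteq\Lambda$ with $|S|\ge K^{-C}|\Lambda|$ and controlled tripling $|\Pi_3 S|\le CK^C|S|$. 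The bounds \eqref{eq:Sbounds} are then inherited from the displayed estimate on $|\Lambda|$, and $S\subseteq\Lambda$ together with $|S|\ge K^{-C}\|\nu_1\|_2^{-2}$ gives $\min_{g\in S}(\check\nu_1*\nu_1)(g)\ge K^{-C}\|\nu_1\|_2^2\ge\frac{1}{CK^C|S|}$, enlarging $C$ if necessary. This is the claim.

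The substance lies entirely in Step 2: one needs the non-commutative Balog--Szemer\'edi--Gowers theorem in \emph{asymmetric} form (only $\nu_1$ appears in the conclusion), with constants \emph{polynomial in $K$ and independent of $|G|$}, and one must arrange that the extracted approximate group $S$ simultaneously has small tripling and sits on a high level set of $\check\nu_1*\nu_1$. Since this has been carried out in \cite{Tao-noncommutative} and packaged in exactly the form we need in \cite[Lemma 15]{Var-squarefree}, I would quote that reference rather than reproduce the argument.
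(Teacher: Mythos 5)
The paper itself offers no proof of this lemma: it is quoted directly from \cite[Lemma 15]{Var-squarefree}, which is exactly the reference you ultimately fall back on, so your approach coincides with the paper's. Your Step 1 reductions are correct (and the WLOG $\|\nu_1\|_2\ge\|\nu_2\|_2$ you impose is in any case harmless, since Young's inequality and the hypothesis force $\|\nu_1\|_2$ and $\|\nu_2\|_2$ to agree up to a factor $K^2$), while the substantive Step 2 is, as in the paper, delegated to the cited Balog--Szemer\'edi--Gowers machinery.
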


We now prove Theorem \ref{theorem:BourgainGamburd}.
Fix a number $K$ in such a way that $CK^C\le|G|^\e$, where $C$ is from Lemma
\ref{lemma:flattening} and $\e$ is from Theorem \ref{theorem:BourgainGamburd}.
(We may assume that $|G|$ is larger than any absolute constant,
since the theorem is vacuous when $|G|$ is small, if we set
the constant $C$ large enough in the theorem.)

By Lemma \ref{lemma:flattening}, for all $l\ge l_1$, we have either $\|\mu^{*(2l)}\|_2\le\|\mu^{*(l)}\|_2/K$, or
there is a symmetric set $S\subset G$ such that $|\Pi_3 S|\le {|G|}^{\e}|S|$ and
\[
\mu^{*(2l)}(S)=\check\mu^{*(l)}*\mu^{*(l)}(S)\ge\frac{|S|}{CK^C|S|}\ge|G|^{-\e}.
\]
In the latter case, $S$ satisfies condition \eqref{equation:largeA} in the theorem
and fails the first alternative of \eqref{equation:growth}.
Then we must have $|S|\ge(\dim\pi)^{-1/3}|G|$
in this case and hence by \equ{eq:Sbounds}
\[\|\mu^{*(l)}\|_2^2\le |G|^{\e}(\dim\pi)^{1/3}|G|^{-1}.
\]
We have already noted that the assumption of Theorem \ref{theorem:BourgainGamburd}
may hold only if $|G|^\e\le2(\dim\pi)^{1/3}$.
Hence in this second case we must have that $\|\mu^{*(l)}\|_2^2\le 2(\dim\pi)^{2/3}|G|^{-1}$.

We consider the sequence $a_k:=\|\mu^{*(2^kl_1)}\|_2^2$ for $k=0,1,\ldots$.
The argument of the previous paragraph shows that either $a_{k+1}\le a_k/K^2$ or
$a_k\le  2(\dim\pi)^{2/3}|G|^{-1}$.
There is an integer $L$ depending only on $\e$ such that $K^{2L}>|G|$.
Then $\|\mu^{*(2^Ll_1)}\|_2^2=a_{L}\le 2(\dim\pi)^{2/3}|G|^{-1}$.

Set $\mu_1=\mu^{*(2^Ll_1)}$, and consider the operator $T:f\mapsto\mu_1^{*(2)}*f$
acting on $L^2(G)$.
We compute the trace of $T$.
Recall that $\d_g$  for $g\in G$ is the Dirac measure supported at $g$, and this
constitute an orthonormal basis in $L^2(G)$.
Hence
\bean
\Tr T&=&\sum_{g\in G}\langle T\d_g,\d_g\rangle=|G|\cdot\mu_1^{*(2)}(1)\\
&=&|G|\sum_{g\in G}\mu_1(g)\mu_1(g^{-1})
=|G|\|\mu_1\|_2^{2}\le 2(\dim\pi)^{2/3}.
\eean

We can also write $\Tr T=\l_1+\cdots+\l_{|G|}$ as the sum of the eigenvalues of $T$.
We can decompose the space $L^2(G)$ into the orthogonal sum of irreducible $G$ representations.
The number of components isomorphic to $\pi$  in this decomposition is $\dim \pi$.
Hence all eigenvalues of $\pi(\mu_1^{*(2)})$ occur with multiplicity at least $\dim\pi$ among
the eigenvalues of $T$.
Thus
\[
\|\pi(\mu)\|^{2^{L+1}l_1}=\|\pi(\mu_1^{*(2)})\|\le\frac{ 2(\dim\pi)^{2/3}}{\dim\pi}.
\]
Taking this inequality to the $1/2^{L+1}l_1$ power, we get the conclusion of the theorem.
\bibliography{varju}
\bibliographystyle{alpha}

\end{document}